\documentclass[ijoc,sglanonrev]{informs5}

\RequirePackage{tgtermes}
\RequirePackage{newtxtext}
\RequirePackage{newtxmath}
\RequirePackage{bm}
\RequirePackage{endnotes}

\OneAndAHalfSpacedXII

\usepackage{natbib}
\bibpunct[, ]{(}{)}{,}{a}{}{,}%
\def\bibfont{\small}%
\def\BIBand{and}%

\usepackage{mathtools}
\usepackage{bbm}
\usepackage{multirow}
\usepackage[para]{threeparttable}
\usepackage[caption=false]{subfig}
\usepackage{enumitem}
\usepackage{algorithm}
\usepackage{algpseudocode}
\usepackage{comment}
\usepackage{hyperref}

\EquationsNumberedThrough    

\TheoremsNumberedThrough     
\ECRepeatTheorems  %

\MANUSCRIPTNO{IJOC-0001-2026.00}

\DeclareMathOperator{\conv}{conv}
\DeclareMathOperator{\intset}{int}

\begin{document}


\RUNAUTHOR{Pathy and Rahimian}

\RUNTITLE{Intersection Cutting Plane Algorithm for Chance-Constrained Programs}

\TITLE{An Intersection Cutting Plane Algorithm for Chance-Constrained Programs with Finite Support}
\ARTICLEAUTHORS{%
\AUTHOR{Soumya Ranjan Pathy}
\AFF{Department of Industrial Engineering, Clemson University, Clemson SC 29634, USA  \EMAIL{spathy@clemson.edu}}

\AUTHOR{Hamed Rahimian}
\AFF{Department of Industrial Engineering, Clemson University, Clemson SC 29634, USA  \EMAIL{hrahimi@clemson.edu}}

} 

\ABSTRACT{%
Standard Big-M mixed-integer programming reformulations of chance-constrained programs (CCPs) with finite support often yield weak linear relaxations
and scale poorly as the number of scenarios increases. 
To address this limitation, we derive two families of intersection cuts from problem-specific S-free sets and develop
decomposition-based branch-and-cut frameworks. The first one exploits the submodular structure of the scenario requirement function, 
while the second one uses probability covers. 
We propose hybrid separation strategies that combine classical mixing inequalities and quantile cuts with the proposed intersection cuts. Computational experiments on integrated production-distribution planning instances show that the proposed hybrid methods reduce computational time compared to pure methods.
}%

\FUNDING{The authors gratefully acknowledge the support of the U.S. Air Force Office of Scientific Research through grant FA9550-24-1-0241.}


\KEYWORDS{chance-constrained programming; intersection cuts; branch-and-cut; stochastic programming} 

\maketitle

\section{Introduction}
Chance-constrained programming (CCP)  provides a framework for optimization under uncertainty by requiring stochastic constraints to hold with a prescribed high probability \citep{charnes1958cost,charnes1959chance,deng2021,dinh2018}. 
Over the past several decades, CCP has been applied in various domains, ranging from operations to engineering, see, e.g., \citet{bienstock2014,song2014}. 
The breadth of these applications underscores the practical importance of efficient CCP solution methods. 

When the underlying distribution of random parameters is finitely supported, the standard approach reformulates CCP as a deterministic mixed-integer program (MIP). It introduces binary scenario indicators together with a knapsack constraint. However, the resulting Big-M formulation has a notoriously weak linear programming (LP) relaxation. This has motivated extensive research on cutting planes to strengthen relaxations or to incorporate them into branch-and-cut algorithms \citep{luedtke2014branch,liu2016decomposition,pathy2024value}, thereby improving both computational efficiency and solution quality.

A foundational direction in this literature is the study of \textit{mixing} or \textit{star} inequalities, introduced by \cite{gunluk2001mixing}, to describe the convex hull of a mixing set in the form $\{(z,x) \in \mathbb{R}_+ \times \{0,1\}^m \mid x_j = 0 \implies z \ge h^j\}$. These inequalities became the foundation for later CCP developments.
The seminal application of mixing inequalities to CCPs is due to \cite{luedtke2010}. They observed that the single-row relaxation of a chance constraint with random right-hand sides contains a mixing set with a 0--1 knapsack constraint as a substructure. They derived \textit{strengthened star inequalities} that generalize classical mixing inequalities by exploiting the knapsack structure of the chance requirement. 
This single-row lens, where row-wise valid inequalities remain valid for the full system, has served as the foundation for deriving valid inequalities for the CCP over the past several decades 
\citep{kucukyavuz2012, abdi2015, zhao2017}. Subsequent work has also increasingly emphasized unifying perspectives on this family of inequalities. In particular, \cite{kilinc2021joint} showed that mixing inequalities are equivalent to polymatroid inequalities associated with specific submodular functions. Moreover, \cite{davarnia2025} proposed a convexification framework that reformulates the mixing set with a knapsack constraint as a bilinear set over a lifted simplex. 
Another notable line of research focuses on \textit{quantile} cuts, introduced by \citet{xie2016}, which can be viewed as projections of mixing inequalities from $(x,z)$-space to $x$-space.

Despite this progress, the dominant cut families for CCPs under finite support are still generated from comparatively fixed views of the mixing set. 
These views are powerful, but they do not directly use the local cone defined by the current fractional LP basis. 
Consequently, after the strongest algebraic or projection cuts have been added, the relaxation may tail off, with additional cuts providing limited changes in orientation or bound improvement. This effect becomes more pronounced as the number of scenarios grows. This motivates a complementary geometric mechanism that chooses cut orientations from the current fractional basis while preserving the probabilistic logic of the CCP.

In this paper, we pursue such a mechanism through \textit{intersection} cuts \citep{balas1971}. An intersection cut starts from a fractional basic solution, constructs the simplex cone generated by the non-basic tableau rays, and intersects these rays with the boundary of a convex \textit{S-free} set whose interior contains no feasible integer points. For CCPs, this construction is especially natural because the chance constraint itself induces S-free sets: the scenario-enforcement requirements yield submodular envelope inequalities, while the knapsack constraint yields probability-cover half-spaces. Intersection cuts therefore provide more than another class of valid inequalities; they translate the combinatorial violation logic of the chance constraint into basis-dependent cuts that complement mixing and quantile cuts. 

The strength of an intersection cut depends strongly on the chosen S-free set \citep{bienstock2020outer,modaresi2016}. 
Related to our work, \cite{fischetti2018} embedded intersection cuts in branch-and-cut methods for mixed-integer bilinear programs. More recently, \cite{xu2024} derived intersection cuts for submodular and submodular-supermodular functions by extending Lov\'{a}sz-type constructions to continuous piecewise-linear convex extensions whose epigraphs are valid S-free sets.

In this paper, we develop an intersection cutting-plane algorithm for CCPs with finite support in both the static (non-recourse) and two-stage (recourse) settings. We propose two ways to construct S-free sets: one based on the submodularity of (row-wise) scenario requirements (see Section \ref{sec:ic_sa}) and the other based on modular probability covers (see Section \ref{sec:ic_ma}). 
We then conduct an extensive computational comparison on integrated production-distribution planning instances. Results show that a hybrid switching strategy---starting with classical mixing inequalities and pivoting to intersection cuts when the optimality gap stalls---achieves the fastest convergence on large-scale instances.

The remainder of this paper is organized as follows. Section~\ref{sec:prelim_ic} introduces preliminaries on intersection cuts, including S-free sets and the generic intersection-cut formula. Section~\ref{sec:problem} formalizes CCP and its MIP reformulations for non-recourse and recourse settings. Section~\ref{sec:nonrecourse} presents the submodular and modular intersection cuts and their derivation for the non-recourse setting. Section~\ref{sec:recourse} extends these frameworks to the recourse setting. Section~\ref{sec:implment} describes algorithmic implementation and separation routines. Section~\ref{sec:numerical-c4} describes hybrid switching strategies and reports comparative computational results. Section~\ref{sec:conclusion-c4} concludes and outlines future research directions. All the proofs are relegated to the Electronic Companion (EC). 

\noindent \textbf{Notation.} 
For a set $S \subseteq \mathbb{R}^n$, we refer to the convex hull, interior, and boundary of $S$ by $\conv(S)$ $\intset(S)$, $\partial S$, respectively. 
Given $n \in \mathbb{N}$, we use $[n]$ to denote the index set $\{1,\dotsc,n\}$. We use $\mathbf{0}$ and $\mathbf{1}$ to denote a vector of 0's and 1's, respectively. For a function $f: \mathbb{R}^n \to \mathbb{R}$, $\mathrm{epi}(f)$ denotes the epigraph of $f$, where $\mathrm{epi}(f)=\{(x,y) \in \mathbb{R}^{n+1}: y \ge f(x)\}$. 

\section{Preliminaries on Intersection Cuts}
\label{sec:prelim_ic}

Let $P$ denote the LP relaxation feasible region and define
$S := \mathbb{Z}^p \times \mathbb{R}^{n-p}$. The mixed-integer feasible set is
$P_I = P \cap S$. Suppose $\bar{x} \in P \setminus P_I$ is an optimal basic feasible solution of
the LP relaxation. Fix its basis and let $J$ denote the non-basic index set. In tableau coordinates,
the simplicial cone rooted at $\bar{x}$ is
\begin{equation} \label{eq:simplex_cone}
  x = \bar{x} + \sum_{j \in J} r^j s_j,
  \qquad s_j \ge 0,\; j \in J,
\end{equation}
where $r^j \in \mathbb{R}^n$ is the extreme ray associated with non-basic variable $s_j$ \citep{gomory1969}.
Intersection cuts derive their validity from the identification of an S-free set---a convex
subset of the continuous relaxation space whose interior contains no integer-feasible
solutions \citep{balas1971, conforti2011, andersen2007}.

\begin{definition}[S-Free Set \citep{balas1971}]
\label{def:sfree_prelim}
Let $S \subseteq \mathbb{R}^n$. A closed convex set $B \subseteq \mathbb{R}^n$ is S-free if
$\intset(B) \cap S = \emptyset$.
\end{definition}

To construct an intersection cut, choose an S-free set $B$ with
$\bar{x} \in \intset(B)$. For each ray $r^j$, define its boundary exit distance by
\begin{equation*} \label{eq:step_length_prelim}
  \eta_j^*
  = \sup\bigl\{\eta \ge 0 : \bar{x} + \eta\, r^j \in B\bigr\}.
\end{equation*}
If a ray never exits $B$, then $\eta_j^* = +\infty$ and we use the convention
$1/\eta_j^* = 0$.

\begin{theorem}[Intersection Cut \citep{balas1971}]
\label{thm:ic_prelim}
The inequality
\begin{equation} \label{eq:ic_general}
  \sum_{j \in J} \frac{1}{\eta_j^*}\, s_j \ge 1
\end{equation}
is valid for $P_I$ and is violated by $\bar{x}$.
\end{theorem}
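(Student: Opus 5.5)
The plan is the classical convexity argument of \citet{balas1971}, carried out in the non-basic coordinates $s=(s_j)_{j\in J}$. Two facts drive it. First, every point of $P$, and hence of $P_I$, lies in the simplicial cone \eqref{eq:simplex_cone} with some $s\ge \mathbf{0}$: dropping the nonnegativity of the basic variables relaxes $P$ to this cone, and the map $s\mapsto \bar{x}+\sum_{j\in J} r^j s_j$ is affine and injective. Second, the region of the cone cut off by \eqref{eq:ic_general}, namely $\{s\ge \mathbf{0}:\sum_{j} s_j/\eta_j^*<1\}$, is mapped into $\intset(B)$. Since $B$ is S-free, no point of $S$ can lie there, and validity follows.

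\textbf{Violation.} This is immediate. The point $\bar{x}$ corresponds to $s=\mathbf{0}$, where the left-hand side of \eqref{eq:ic_general} is $0<1$. We also note that $\eta_j^*>0$ for every $j\in J$, because $\bar{x}\in\intset(B)$, so every coefficient $1/\eta_j^*$ is finite.

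\textbf{Validity.} Take $x\in P_I$ and write it as $x=\bar{x}+\sum_{j\in J} r^j s_j$ with $s\ge\mathbf{0}$. Suppose, for contradiction, that $\lambda:=\sum_{j\in J} s_j/\eta_j^*<1$. Let $J_f=\{j:\eta_j^*<\infty\}$ and $J_\infty=J\setminus J_f$.
\begin{itemize}
\item For $j\in J_f$, the point $p^j=\bar{x}+\eta_j^* r^j$ lies in $B$, because $B$ is closed.
\item For $j\in J_\infty$, the ray $\bar{x}+\mathbb{R}_+ r^j$ lies in $B$, so $r^j$ belongs to the recession cone of $B$.
\end{itemize}
Rewrite $x$ as
\[
x=\Bigl(1-\lambda\Bigr)\bar{x}+\sum_{j\in J_f}\frac{s_j}{\eta_j^*}\,p^j+\sum_{j\in J_\infty} s_j r^j .
\]
The first two groups form a convex combination in which $\bar{x}$ carries the positive weight $1-\lambda$.

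The one delicate step, which I expect to be the main obstacle, is showing that this point is in $\intset(B)$ rather than merely in $B$. I would use the standard fact that if $u\in\intset(B)$, $v\in B$ and $t\in(0,1]$, then $tu+(1-t)v\in\intset(B)$. The argument is as follows.
\begin{itemize}
\item Take a ball $u+\epsilon\mathbb{B}\subseteq B$. Then $tu+(1-t)v+t\epsilon\mathbb{B}\subseteq B$ by convexity of $B$.
\item If $\lambda>0$, apply this with $u=\bar{x}$, $t=1-\lambda$ and $v=\sum_{j\in J_f}\frac{s_j/\eta_j^*}{\lambda}\,p^j\in B$. If $\lambda=0$, the convex combination is just $\bar{x}$ itself.
\item Adding a recession direction preserves interiority, since $\intset(B)+\mathrm{rec}(B)\subseteq\intset(B)$ for closed convex $B$.
\end{itemize}
Hence $x\in\intset(B)$. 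But $x\in P_I\subseteq S$, which contradicts $\intset(B)\cap S=\emptyset$. Therefore $\lambda\ge1$ for every point of $P_I$, which is exactly \eqref{eq:ic_general}.
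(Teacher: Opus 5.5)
Your proposal is correct and follows the same convex-combination argument as the paper: write $x$ as a combination of $\bar{x}\in\intset(B)$, which carries positive weight $1-\lambda$, and the exit points $\bar{x}+\eta_j^* r^j\in B$, conclude $x\in\intset(B)$, and contradict S-freeness. Your version is more complete than the paper's, which carries out the argument only when every $\eta_j^*$ is finite and disposes of the infinite case by the convention $1/\eta_j^*=0$; your recession-cone step ($r^j\in\mathrm{rec}(B)$ when $\eta_j^*=\infty$, together with $\intset(B)+\mathrm{rec}(B)\subseteq\intset(B)$) and your explicit interiority lemma supply exactly what that shortcut leaves unjustified.
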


\section{Problem Definition}
\label{sec:problem}
We consider a generic CCP in which a decision vector
must satisfy uncertain constraints with high probability. Let
$x \in \mathbb{R}^n$ denote the decision vector, constrained to a deterministic set $X \subseteq \mathbb{R}_{+}^n$. 
Assume that $\xi$ is a random vector with a finite support on $N$ scenarios, i.e., $\xi \in \{\xi^1, \ldots, \xi^N \}$. Let $\Omega := \{1, \dots, N\}$. We have $\mathbb{P}(\xi = \xi^\omega) = p_\omega > 0$ for each $\omega \in \Omega$ and $\sum_{\omega \in \Omega} p_\omega = 1$.
For each scenario $\omega \in \Omega$, define $P_\omega := P(\xi^\omega)$. 
The following CCP describes the central problem under study in this paper
\begin{equation} \label{eq:ccp}
\min_{x} \left\{ c^\top x \;:\; \mathbb{P}\{ x \in P(\xi) \} \ge 1 - \epsilon, \; x \in X \right\},
\end{equation}
where $\epsilon \in (0, 1)$ is the risk tolerance. 
Moreover, we impose that $p_\omega \leq \epsilon$ for each $\omega$, since if $p_\omega > \epsilon$, then $x \in P_\omega$ must hold for any feasible $x$. In such cases, we can incorporate these inequalities directly into the definition of $X$ and exclude the corresponding scenario $\omega$ from further consideration.

We assume that $P_\omega$, $\omega \in \Omega$, is polyhedral.
We consider two common structures for $P_\omega$:

\noindent {\bf Non-recourse} setting, where  $P_\omega$ is the set of decisions that are feasible for realization $\xi^\omega$, with no recourse action, defined as
    \begin{equation} \label{eq:Pk_nonrecourse}
    P_\omega = \left\{ x \in \mathbb{R}^n \;:\; A x \ge b^\omega \right\}. 
    \end{equation}
    Here, $A \in \mathbb{R}^{m \times n}$ is the constraint matrix and
    $b^\omega \in \mathbb{R}_{+}^m$ is the nonnegative right-hand side vector under scenario $\omega \in \Omega$.

\noindent {\bf Recourse} setting, where corrective actions are allowed after uncertainty is realized. The set $P_\omega$ contains first-stage decisions for which a feasible scenario recourse action exists, defined as 
\begin{equation} \label{eq:Pk_recourse}
    P_\omega = \left\{ x \in \mathbb{R}^n \;:\; \exists\, y^\omega \in \mathbb{R}^d_+ \text{ s.t. } T^\omega x + W^\omega y^\omega \ge d^\omega \right\}. 
    \end{equation}
Here, $T^\omega \in \mathbb{R}^{m \times n}$ and $W^\omega \in \mathbb{R}^{m \times d}$ are,
respectively, the technology and recourse matrices under scenario $\omega$, and
$d^\omega \in \mathbb{R}^m$ is the corresponding right-hand side vector.

We make the following structural assumptions. 

{\bf A1 (Nonemptiness)} For each scenario $\omega \in \Omega$, $P_\omega$ is a nonempty polyhedron.

{\bf A2 (Common Recession Cone)} The polyhedra $P_\omega$ share a common recession cone. That is, there exists a cone $C \subseteq \mathbb{R}^n$ such that $C = \{r \in \mathbb{R}^n \mid x + \lambda r \in P_\omega\; \forall\, x \in P_\omega, \lambda \ge 0\}$ for all $\omega \in \Omega$.

Assumption~{\bf (A2)} holds, for example, when randomness appears only in the right-hand sides, so
that the left-hand-side matrices (e.g., $T$, $W$) are deterministic and fixed across
scenarios. In this case, we have $C= \{r:\sigma^\top T r \ge 0, \; \sigma \in \Sigma\}$, where $\Sigma:=\{\sigma: \sigma^\top W \le 0, \; \sigma \ge 0\}$. 
However, the developments in subsequent sections require only the common
recession-cone property and do not otherwise restrict left-hand-side stochasticity.

We now present a widely-used formulation of the CCP, defined in \eqref{eq:ccp}, as an MIP.  To encode the probabilistic constraint $\mathbb{P}\{ x \in P(\xi) \} \ge 1 - \epsilon$ in \eqref{eq:ccp}, for each $\omega \in \Omega$, we introduce a binary variable $\beta_\omega \in \{0, 1\}$, where $\beta_\omega=0$ enforces the constraint $x \in P_\omega$. 
Then, the total probability mass of violated scenarios
cannot exceed $\epsilon$. This yields the following equivalent logical mixed-integer formulation:
\begin{equation} \label{eq:det_equiv}
\begin{aligned}
\min_{x, \beta} \quad & c^\top x \\
\text{s.t.} \quad & x \in X, \\
& \beta_\omega = 0 \implies x \in P_\omega, \quad \forall\, \omega \in \Omega, \\
& \sum_{\omega \in \Omega} p_\omega \beta_\omega \le \epsilon, \\
& \beta \in \{0, 1\}^N.
\end{aligned}
\end{equation}
The implication $\beta_\omega = 0 \implies x \in P_\omega$ is often
linearized with Big-M coefficients, but the resulting LP relaxations are typically weak.
Instead, we maintain the compact logical representation and apply a decomposition-based cutting-plane method that exploits the underlying mixing-set substructure and the knapsack constraint to generate strong bounds. Similar approaches have been explored in the literature via the families of strengthened mixing inequalities, see, e.g., \cite{luedtke2014branch, pathy2024value, pathy2025decomposition}.

\section{Intersection Cuts for the Non-Recourse CCPs}
\label{sec:nonrecourse}

To address the weak LP relaxation bounds inherent in the Big-M formulation of CCPs, in this section, we develop geometric cutting-plane approaches for the non-recourse setting, defined via \eqref{eq:Pk_nonrecourse}.  Our proposed cutting-plane procedures are constructed based on intersection cuts that exploit the algebraic and combinatorial structure of the chance constraint to generate strong valid inequalities from the fractional extreme rays of the LP relaxation.  We propose two approaches that differ in the construction of the underlying S-free set.  The first approach, referred to as the 
\emph{submodular intersection cut}, is developed in Section~\ref{sec:ic_sa}.  
The second approach, referred to as the \emph{modular intersection cut}, is presented in Section~\ref{sec:ic_ma}. 

\subsection{MIP Reformulation of CCP}

\label{sec:mip}
Consider the non-recourse setting, defined via \eqref{eq:Pk_nonrecourse}.  
We know that $\beta_\omega=0$ enforces that $Ax \ge b^\omega$. Because $\epsilon < 1$, we must have $A x \ge b^\omega$  for at least one $\omega \in \Omega$, and because $b^\omega \ge 0$ for
all $\omega \in \Omega$, this implies $Ax \ge 0$ in every feasible solution of \eqref{eq:ccp}. Consequently, 
defining \(w = Ax\), we rewrite \eqref{eq:det_equiv} as
\begin{equation}
\label{form:chance_MIP}
\min_{x, w, \beta}    \Biggl\{ c^\top x : 
\begin{array}{l}
x \in X, \; w - Ax = 0, \; w \ge 0, \\
\beta_\omega = 0 \Rightarrow w \ge b^\omega, \; \forall\, \omega \in \Omega, \; \sum_{\omega \in \Omega} p_\omega \beta_\omega \le \epsilon, \;
\beta \in \{0, 1\}^N
\end{array}
\Biggr\}.
\end{equation}
Following the standard row-wise analysis in the mixing-set literature, we focus on the
\((w,\beta)\)-space set
\begin{equation*}
G = \Biggl\{ (w,\beta) \in \mathbb{R}_+^m \times \{0,1\}^N\, \Bigg| \,
\begin{array}{l}
\beta_\omega = 0 \Rightarrow w \ge b^\omega, \; \forall\, \omega \in \Omega, \; 
\sum_{\omega \in \Omega} p_\omega \beta_\omega \le \epsilon
\end{array}
\Biggr\}.
\end{equation*}
Because the logical constraints are separable by row, we can write
\begin{equation*}
G = \bigcap_{i \in [m]} \Bigl\{(w,\beta): (w_i,\beta) \in F_i\Bigr\},
\end{equation*}
where, for each \(i \in [m]\),
\begin{equation*}
F_i = \Biggl\{ (w_i,\beta) \in \mathbb{R}_+ \times \{0,1\}^N\, \Bigg| \,
\begin{array}{l}
\beta_\omega = 0 \Rightarrow w_i \ge b_i^{\omega}, \; \forall\, \omega \in \Omega, \; 
\sum_{\omega \in \Omega} p_\omega \beta_\omega \le \epsilon
\end{array}
\Biggr\}.
\end{equation*}
Hence, strengthening the relaxation of each \(F_i\) directly strengthens the relaxation of
\(G\). In particular, any valid inequality for \(\conv(F_i)\) yields a valid inequality for 
\(\conv(G)\), see, e.g., \cite{luedtke2010}.

To simplify notation, we drop the row index and study the generic set
\begin{equation} \label{eq:genprob_chance}
F = \Biggl\{ (y,\beta) \in \mathbb{R}_+ \times \{0,1\}^N\, \Bigg| \,
\begin{array}{l}
\beta_\omega = 0 \Rightarrow y \ge h^\omega, \; \forall\, \omega \in \Omega,\; 
\sum_{\omega \in \Omega} p_\omega \beta_\omega \le \epsilon
\end{array}
\Biggr\},
\end{equation}
replacing $w_i$ with $y$, and substituting $b^{\omega}_{i}$ with $h^{\omega}$ for each $i \in [m]$. 
We refer to the mixed-integer set \(F\) as the \textit{mixing set with a knapsack constraint}, for which we are interested in developing valid inequalities based on intersection cuts. Without loss of generality, assume the right-hand-side values are ordered as
\(h^1 \ge h^2 \ge \cdots \ge h^N\).

\subsection{Submodular Intersection Cut}
\label{sec:ic_sa}

This section develops a geometric cutting-plane approach by exploiting the structure of the right-hand-side values \( h^\omega \). By mapping the logical enforcement of scenario constraints to a submodular set function defined over a Boolean hypercube and lifting it to a
multi-faceted S-free set via the extended polymatroid and extended envelope frameworks \citep{edmonds1970, lovasz1983, xu2024}, we construct a convex extension over a continuous domain that yields a valid S-free set. Intersection cuts are then generated directly from the extreme rays of the simplex tableau of the LP relaxation of \eqref{form:chance_MIP}, as explained in Section \ref{sec:prelim_ic}.

\subsubsection{Submodular Set Function}
\label{sec:submodular_function}

Consider the logical constraints in \eqref{eq:genprob_chance}. 
For each $\omega \in \Omega$, each constraint can be modeled using a standard
Big-M formulation as 
\begin{equation*}
    y + M_{\omega} \beta_\omega \ge h^\omega, 
\end{equation*}
where 
\( M_{\omega} \) is a sufficiently large constant.
To analyze the underlying combinatorial structure, we 
apply a linear transformation to the binary indicators by setting \( z_\omega = 1 - \beta_\omega \in \{0, 1\} \), where \( z_\omega = 1 \) enforces $y \ge h^\omega$. 
Given \( y \ge 0 \) and letting \( M_{\omega} = h^\omega \), the constraint 
can be written as \( y \ge h^\omega z_\omega \). Consequently,
\( y \) must be bounded below by the maximum requirement value among all enforced
scenarios. We define a set function
\( f : \{0, 1\}^{N} \to \mathbb{R} \) over the Boolean hypercube as
\begin{equation} \label{eq:submod_function}
    f(z) = \max_{\omega \in \Omega:\, z_\omega = 1} h^\omega,
\end{equation}
with the convention \( f(\mathbf{0}) = 0 \). 
The following result establishes that the set function $f$ is submodular, as also observed by \cite{kilinc2021joint}.

\begin{proposition}[Submodularity of $f$]
\label{prop:submodularity}
The set function $f$, defined in \eqref{eq:submod_function}, is submodular over the Boolean
hypercube $\{0,1\}^{N}$.
\end{proposition}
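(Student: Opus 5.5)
We identify each $z \in \{0,1\}^N$ with its support $S(z) = \{\omega \in \Omega : z_\omega = 1\}$. Then $f$ becomes the set function $f(S) = \max_{\omega \in S} h^\omega$ with $f(\emptyset) = 0$. Since $h^\omega \ge 0$ for every $\omega$ (the right-hand sides are nonnegative), this convention is consistent with the max over a nonempty set. Submodularity on the Boolean hypercube is then the usual lattice condition $f(z \vee z') + f(z \wedge z') \le f(z) + f(z')$. We prove it through the equivalent diminishing-returns characterization:
\begin{equation*}
f(S \cup \{k\}) - f(S) \ge f(T \cup \{k\}) - f(T) \qquad \text{for all } S \subseteq T \subseteq \Omega,\; k \notin T.
\end{equation*}
This characterization is standard, and we invoke it directly.

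\textbf{Step 1: compute the marginal gain.} For any set $S$ and any $k \notin S$, monotonicity of the maximum gives
\begin{equation*}
f(S \cup \{k\}) = \max\{f(S), h^k\}, \qquad \text{so} \qquad f(S \cup \{k\}) - f(S) = \max\{0,\, h^k - f(S)\}.
\end{equation*}
The empty-set case also fits this formula, because $f(\emptyset) = 0 \le h^k$.

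\textbf{Step 2: use monotonicity of $f$.} If $S \subseteq T$, then $f(S) \le f(T)$. This holds because the max is taken over a larger set, and if $S = \emptyset$ it follows from $f(\emptyset) = 0 \le f(T)$.

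\textbf{Step 3: conclude.} The map $t \mapsto \max\{0, h^k - t\}$ is nonincreasing in $t$. Combining this with Step 2, the marginal gain at $S$ is at least the marginal gain at $T$, which is exactly the diminishing-returns inequality.

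The only delicate point is the boundary convention $f(\mathbf{0}) = 0$. It requires nonnegativity of the $h^\omega$ to keep $f$ monotone and the marginal formula valid at the empty set. This holds because $h^\omega = b_i^\omega \ge 0$ by assumption. If one prefers to avoid the diminishing-returns characterization, the lattice inequality can be checked directly. Write $a = f(S)$ and $b = f(T)$, and assume $a \ge b$ without loss of generality. Then $f(S \cup T) = \max\{a, b\} = a$. Also $f(S \cap T) \le \min\{a, b\} = b$, since $S \cap T$ is contained in both sets. Summing, $f(S \cup T) + f(S \cap T) \le a + b$. This is essentially a one-line verification, so no real obstacle arises.
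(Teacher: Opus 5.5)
Your proof is correct and follows essentially the same route as the paper. Both arguments verify the diminishing-returns inequality via monotonicity of $f$, and your closed form $f(S\cup\{k\})-f(S)=\max\{0,\,h^k-f(S)\}$ simply compresses the paper's two cases ($f(V)\ge h^{\omega'}$ versus $f(V)<h^{\omega'}$) into one line. Your explicit remark that $h^\omega=b_i^\omega\ge 0$ is needed for monotonicity under the convention $f(\mathbf{0})=0$ is something the paper uses only implicitly, and it is genuinely necessary: with $h^1=-1$ and $h^2=-2$ one gets $f(\{1\})+f(\{2\})=-3<-1=f(\{1,2\})+f(\emptyset)$.
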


\subsubsection{Extended Polymatroid and Envelope}
\label{sec:extended_polymatroid}

While the set function \( f \) accurately captures the feasibility structure \eqref{eq:genprob_chance}, the intersection-cut construction requires a continuous, convex region against which LP relaxation rays can be evaluated. We therefore extend the submodular function \( f \) continuously to
\( \mathbb{R}^{N} \) via the extended polymatroid framework \citep{nemhauser1978,lovasz1983,edmonds1970}, based on an associated polyhedron in the dual space of the scenario
indicators.

\begin{definition}[Extended Polymatroid]
\label{def:epm}
The extended polymatroid (EPM) of the submodular set function $f$, defined in \eqref{eq:submod_function}, is formed as
\begin{equation} \label{eq:epm}
    \mathrm{EPM}_f
    = \bigl\{ \pi \in \mathbb{R}^{N} : \pi^\top z \le f(z),
      \;\forall\, z \in \{0,1\}^{N} \bigr\}.
\end{equation}
\end{definition}

By the fundamental theorem of polymatroids \citep{edmonds1970}, the extreme points of
\( \mathrm{EPM}_f \), denoted \( \mathrm{ext}(\mathrm{EPM}_f) \), correspond precisely to the
sorted marginal gains of the requirement values \( h^\omega \), obtainable in \( O(N\log N) \) time via a greedy sorting algorithm.

\begin{definition}[Extended Envelope]
\label{def:ee}
The extended envelope (EE) of the submodular set function $f$, defined in \eqref{eq:submod_function}, is denoted as $\overline{F}_f: \mathbb{R}^{N} \to \mathbb{R}$, and is defined as 
\begin{equation} \label{eq:ee}
    \overline{F}_f(z)
    = \max_{\pi \in \mathrm{ext}(\mathrm{EPM}_f)} \pi^\top z.
\end{equation}
\end{definition}

The following properties of the extended envelope are established in \cite[Proposition~2 and Lemma~1]{xu2024}.
\begin{proposition}[Properties of the Extended Envelope]
\label{prop:ee_properties}
The extended envelope \( \overline{F}_f \), defined in \eqref{eq:ee}, has the following properties:
\begin{enumerate}
    \item \( \overline{F}_f\) is a continuous, piecewise-linear convex function over \( \mathbb{R}^{N} \), serving as a generalization of the classical Lov\'{a}sz extension \citep{lovasz1983}.
    \item For any \( z \in \{0,1\}^{N} \), \( \overline{F}_f(z) = f(z) \).
    \item $\mathrm{epi}(\overline{F}_f)= \conv\big(\mathrm{epi}(f)\big)$.  
\end{enumerate}
\end{proposition}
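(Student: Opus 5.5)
The plan is to prove the three properties in order, leaning on the explicit structure of $\mathrm{EPM}_f$ for the submodular function $f$ in \eqref{eq:submod_function}, which is submodular by Proposition~\ref{prop:submodularity}.

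\textbf{Step 1: the extreme points.} First we make the extreme points of $\mathrm{EPM}_f$ concrete. Here $f(\mathbf{0})=0$ and $f$ is nondecreasing. For each permutation $\sigma$ of $\Omega$, the greedy vector $\pi^\sigma$ is given by
\[
\pi^\sigma_{\sigma(k)} = f(\{\sigma(1),\dots,\sigma(k)\}) - f(\{\sigma(1),\dots,\sigma(k-1)\}).
\]
By Edmonds' theorem \citep{edmonds1970}, each $\pi^\sigma$ lies in $\mathrm{EPM}_f$, and these vectors are exactly $\mathrm{ext}(\mathrm{EPM}_f)$, a finite set. We write $\mathrm{ext}(\mathrm{EPM}_f)$ as the finite set $\{\pi^\sigma\}$.

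\textbf{Step 2: property 1.} Since $\overline{F}_f$ is the pointwise maximum of finitely many linear functions $z\mapsto(\pi^\sigma)^\top z$, it is immediately continuous, piecewise linear and convex. For the link with the Lov\'asz extension, fix $z\in[0,1]^N$ and order its coordinates decreasingly. Edmonds' greedy argument shows that the maximum of $\pi^\top z$ over $\mathrm{EPM}_f$ is attained at the $\pi^\sigma$ with $\sigma$ sorting $z$. Hence on the cube (indeed on $\mathbb{R}^N_+$), $\overline{F}_f$ equals the Lov\'asz extension.

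\textbf{Step 3: property 2.} Take $z=\mathbf{1}_S$. Every $\pi\in\mathrm{EPM}_f$ satisfies $\pi^\top \mathbf{1}_S\le f(S)$, so $\overline{F}_f(\mathbf{1}_S)\le f(S)$. For the reverse inequality, pick $\sigma$ that lists $S$ first. The sum of $\pi^\sigma$ over $S$ telescopes to $f(S)-f(\emptyset)=f(S)$, giving equality.

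\textbf{Step 4: property 3, the main obstacle.} Here $\mathrm{epi}(f)$ is understood as $\{(z,y): z\in\{0,1\}^N,\ y\ge f(z)\}$, so its convex hull lives over $[0,1]^N$. The identity must therefore be read with $\overline{F}_f$ restricted to the cube (as in \citealp{xu2024}), where the domain issue is handled. I would proceed in two inclusions.
\begin{itemize}
\item[$\supseteq$:] By Step 2, for $z\in[0,1]^N$ with sorted level sets $S_1\subset\dots\subset S_N$, we can write $z=\sum_k \lambda_k \mathbf{1}_{S_k}$ with $\lambda_k\ge0$ and $\sum_k\lambda_k\le1$; the remaining weight goes on $\mathbf{0}$, where $f(\mathbf{0})=0$. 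The Lov\'asz formula gives $\overline{F}_f(z)=\sum_k\lambda_k f(S_k)$. Hence $(z,\overline{F}_f(z))$ is a convex combination of points of $\mathrm{epi}(f)$, and adding the recession direction $(0,1)$ covers the rest of the epigraph.
\item[$\subseteq$:] $\mathrm{epi}(\overline{F}_f)$ is convex and, by Step 3, contains $\mathrm{epi}(f)$. Therefore it contains $\conv(\mathrm{epi}(f))$.
\end{itemize}

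The delicate point is the $\supseteq$ direction. It relies on the greedy optimality in Step 2, which is exactly where submodularity is used, and on the bookkeeping for the point $\mathbf{0}$. Outside the cube the statement is understood with the domain convention of \citet[Lemma~1]{xu2024}, which we cite for that technicality.
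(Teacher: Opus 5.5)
Your proof is correct, but it takes a different route from the paper. The paper gives no argument of its own; it imports all three properties from Xu et al.\ (their Proposition~2 and Lemma~1).

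You instead rebuild them from two classical facts. The first is Edmonds' identification of $\mathrm{ext}(\mathrm{EPM}_f)$ with the greedy vectors $\pi^\sigma$. This gives item~1 immediately, as a maximum of finitely many linear functions, and item~2 by telescoping with $\sigma$ listing $S$ first. The second is Lov\'asz's level-set decomposition $z=\sum_k\lambda_k\mathbf{1}_{S_k}$ combined with greedy optimality, which gives the nontrivial inclusion in item~3.

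The citation buys brevity. Your version is self-contained and makes explicit that submodularity enters only through greedy optimality. Most usefully, it exposes that item~3 cannot hold literally on $\mathbb{R}^N$: $\conv(\mathrm{epi}(f))$ projects onto $[0,1]^N$ in $z$-space, while $\overline{F}_f$ is finite everywhere. Your two inclusions prove exactly the correct statement, $\{(z,y)\in\mathrm{epi}(\overline{F}_f): z\in[0,1]^N\}=\conv(\mathrm{epi}(f))$. So the closing appeal to a ``domain convention'' of Xu et al.\ is unnecessary; just state this restricted identity as what you prove.

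Two cosmetic remarks. First, your bullet labelled $\supseteq$ actually establishes $\mathrm{epi}(\overline{F}_f)\subseteq\conv(\mathrm{epi}(f))$ over the cube, so the labels are reversed relative to the displayed identity. This is harmless, since both directions are covered. Second, the restriction to $\mathbb{R}^N_+$ in Step~2 is forced only because you maximize over all of $\mathrm{EPM}_f$, which is unbounded off $\mathbb{R}^N_+$. The maximum over the finite set $\{\pi^\sigma\}$ is the support function of the base polytope, so $\overline{F}_f$ agrees with the (positively homogeneous) Lov\'asz formula on all of $\mathbb{R}^N$.
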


\subsubsection{Derivation of the Submodular Intersection Cut}
\label{sec:ic_sa_derivation}

We now translate the convex-extension results of Section~\ref{sec:extended_polymatroid}
into the geometric ingredients needed for cut generation. We first propose an S-free set for \eqref{eq:genprob_chance}, and then
use this set to define ray-boundary intersections from a fractional LP apex.

\begin{lemma}[Half-space as an S-free set]
\label{lem:sfree}
Consider $f$ defined in \eqref{eq:submod_function}, and let $\mathrm{EPM}_f$ and $\overline{F}_f$ be as defined in \eqref{eq:epm} and \eqref{eq:ee}. 
For any $\pi \in \mathrm{EPM}_f$, define $\mathcal{H}_{\pi} := \{(y,z) \in \mathbb{R}\times[0,1]^N : y \le  \pi^\top z\}
$. Then, $\mathcal{H}_{\pi}$ is S-free with respect to $\mathrm{epi}(f)$, i.e., $\intset\big(\mathcal{H}_{\pi}\big)\cap\mathrm{epi}(f)=\emptyset$. 
\end{lemma}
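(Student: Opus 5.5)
The plan is to argue directly from the definitions. The goal is to show that no point of $\mathrm{epi}(f)$, read as a point $(y,z)$ with $z\in\{0,1\}^N$ and $y\ge f(z)$, lies in $\intset(\mathcal{H}_\pi)$.

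\emph{Step 1: describe the interior.} The set $\mathcal{H}_\pi$ is the intersection of the closed half-space $\{(y,z): y\le \pi^\top z\}$ with the box $\mathbb{R}\times[0,1]^N$. Interior of an intersection of finitely many closed convex sets is contained in the intersection of their interiors. Hence
\[
\intset(\mathcal{H}_\pi)\subseteq \{(y,z)\in\mathbb{R}\times(0,1)^N : y<\pi^\top z\}.
\]
Both the strict inequality $y<\pi^\top z$ and the open-box condition follow from this containment. We will only need the strict inequality, with the box condition as a backup.

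\emph{Step 2: compare with the epigraph.} Take any $(y,z)\in\mathrm{epi}(f)$, so $z\in\{0,1\}^N$ and $y\ge f(z)$. Since $\pi\in\mathrm{EPM}_f$, Definition~\ref{def:epm} gives $\pi^\top z\le f(z)$ for every binary $z$. Therefore $y\ge f(z)\ge \pi^\top z$, and the strict inequality $y<\pi^\top z$ fails. So $(y,z)\notin\intset(\mathcal{H}_\pi)$, which gives $\intset(\mathcal{H}_\pi)\cap\mathrm{epi}(f)=\emptyset$. If $\mathrm{epi}(f)$ is instead read as containing the points with $y\ge \overline{F}_f(z)$ over $\mathbb{R}^N$, we add one step. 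Every $\pi\in\mathrm{EPM}_f$ satisfies $\pi^\top z\le \overline{F}_f(z)$: a linear function bounded by $f$ on vertices stays below the convex hull of $\mathrm{epi}(f)$, and by Proposition~\ref{prop:ee_properties}(3) that hull is $\mathrm{epi}(\overline{F}_f)$. The same chain of inequalities then applies.

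\emph{Step 3: closedness and convexity.} To match Definition~\ref{def:sfree_prelim} fully, we also note that $\mathcal{H}_\pi$ is closed and convex, being an intersection of closed half-spaces.

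The main obstacle is interpreting ``S-free with respect to $\mathrm{epi}(f)$'' correctly. We must check that the binary points of $\mathrm{epi}(f)$ are exactly the feasible points of \eqref{eq:genprob_chance} under $z=\mathbf{1}-\beta$; the knapsack constraint only removes points and so cannot hurt. We must also handle the boundary of the box carefully, which is why Step 1 is phrased as a containment and not an equality. Beyond that the proof is a one-line inequality chain.
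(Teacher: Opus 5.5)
Your proof is correct and follows the paper's argument: both rest on the chain $y \ge f(z) \ge \pi^\top z$ for binary $z$, which comes from the defining inequalities of $\mathrm{EPM}_f$ and rules out the strict inequality $y<\pi^\top z$ that characterizes the interior, and both finish by noting that $\mathcal{H}_\pi$ is closed and convex. Your extra remarks are valid but go beyond what the paper uses: the explicit description of $\intset(\mathcal{H}_\pi)$, and the observation that its open-box condition $z\in(0,1)^N$ alone already excludes every binary $z$.
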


Suppose the LP relaxation of $F$, defined in \eqref{eq:genprob_chance}, yields an optimal basic fractional solution \( (\bar{y}, \bar{z}) \)
such that \( \bar{y} < \overline{F}_f(\bar{z}) \). By \eqref{eq:ee}, there exists $\bar{\pi} \in \mathrm{EPM}_f$ such that $\overline{F}_f(\bar{z})=\bar{\pi}^\top \bar z$. Therefore, \( (\bar{y}, \bar{z}) \in \intset(\mathcal{H}_{\bar{\pi}})\). We evaluate the local geometry of the corner polyhedron
rooted at this fractional apex. Let \( J \) be the
index set of non-basic variables \( s_j \ge 0 \) in the optimal simplex tableau, and let
\( (r_y^j, r_z^j) \) denote the corresponding extreme directions.

\begin{lemma}[Ray Exit Distance]
\label{lemma:ray_exit}
Let \( (\bar{y}, \bar{z}) \)
such that \( \bar{y} < \overline{F}_f(\bar{z}) \), where $\overline{F}_f(\bar{z})=\bar{\pi}^\top \bar z$ for some $\bar{\pi} \in \mathrm{EPM}_f$. 
The ray exit distance is
\begin{equation}
\label{eq:eta_sa}
    \eta_j^*= \frac{\bar{\pi}^\top \bar{z} -\bar y }{r_y^j - \bar{\pi}^\top  r_z^j}, 
\end{equation}
if $r_y^j - \bar{\pi} ^\top  r_z^j > 0$; otherwise, $\eta^*_j=+\infty$. 
\end{lemma}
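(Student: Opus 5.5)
The plan is to compute the exit distance directly from the definition of $\eta_j^*$ in Section~\ref{sec:prelim_ic}, taking $B=\mathcal{H}_{\bar{\pi}}$. Lemma~\ref{lem:sfree} guarantees that $\mathcal{H}_{\bar{\pi}}$ is S-free. The choice of $\bar{\pi}$ with $\overline{F}_f(\bar z)=\bar{\pi}^\top\bar z$ and $\bar y<\overline{F}_f(\bar z)$ places the apex strictly inside the defining half-space. For each non-basic index $j\in J$, parametrize the ray as $(\bar y+\eta r_y^j,\ \bar z+\eta r_z^j)$ with $\eta\ge 0$. Following the cut derivation, I treat the relevant boundary of $\mathcal{H}_{\bar{\pi}}$ as the hyperplane $y=\pi^\top z$ with $\pi=\bar{\pi}$; the box $[0,1]^N$ is a bounded domain restriction that the LP rays already respect. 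Membership of the ray point in the half-space $y\le\bar{\pi}^\top z$ then becomes the scalar inequality
\[
\bar y+\eta r_y^j\le \bar{\pi}^\top\bar z+\eta\,\bar{\pi}^\top r_z^j
\quad\Longleftrightarrow\quad
\eta\,\bigl(r_y^j-\bar{\pi}^\top r_z^j\bigr)\le \bar{\pi}^\top\bar z-\bar y .
\]

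The key step is a two-case analysis on the sign of $\delta_j:=r_y^j-\bar{\pi}^\top r_z^j$. The right-hand side $\bar{\pi}^\top\bar z-\bar y$ is strictly positive.

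\emph{Case $\delta_j>0$.} The feasible set of $\eta$ is $[0,(\bar{\pi}^\top\bar z-\bar y)/\delta_j]$. Its supremum is exactly \eqref{eq:eta_sa}, and it is positive and finite.

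\emph{Case $\delta_j\le 0$.} The left-hand side is nonpositive for every $\eta\ge0$, so the inequality holds for all $\eta$. Hence the ray never leaves the half-space and $\eta_j^*=+\infty$, matching the stated convention $1/\eta_j^*=0$.

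The main obstacle is reconciling the box restriction $z\in[0,1]^N$ in the definition of $\mathcal{H}_{\pi}$ with the unconstrained supremum. Strictly read, a ray could leave $[0,1]^N$ before reaching the hyperplane, which would give a smaller $\eta_j^*$. I would handle this in one of two ways. The first option is to note that using the hyperplane exit distance amounts to working with the larger half-space $\{(y,z)\in\mathbb{R}^{1+N}:y\le\bar{\pi}^\top z\}$. This set is still S-free for the relevant feasible points, because every point of $\mathrm{epi}(f)$ at binary $z$ satisfies $y\ge f(z)\ge\bar{\pi}^\top z$, and that is all Theorem~\ref{thm:ic_prelim} needs for validity over the integer-feasible set. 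The second option is to observe that the exit distance through the box faces only enlarges the S-free region's contribution. Either way, \eqref{eq:eta_sa} yields a valid (possibly slightly weaker) cut. I would adopt the first option and state the lemma for the half-space defined by $\bar{\pi}$.
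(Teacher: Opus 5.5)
Your proposal is correct and follows the paper's proof. The paper likewise writes membership along the ray as the linear function $\zeta^j(\eta)=(\bar y+\eta r_y^j)-\bar\pi^\top(\bar z+\eta r_z^j)$ and splits on the sign of $r_y^j-\bar\pi^\top r_z^j$. Your choice to drop the box $[0,1]^N$ and work with the full half-space $\{(y,z): y\le\bar\pi^\top z\}$ is what the paper does implicitly, and it is in fact needed for $(\bar y,\bar z)$ to be interior when some $\bar z_\omega\in\{0,1\}$. One correction to your remark: enlarging the S-free set can only increase each $\eta_j^*$, so the resulting cut is at least as strong as the boxed version, not ``possibly slightly weaker.''
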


Theorem~\ref{thm:ic_sa} is obtained by the ray-intersection construction above using the
S-free set \(\mathcal{H}_{\pi}\) from Lemma~\ref{lem:sfree}, with coefficients defined by the
ray-exit reciprocals, defined in \eqref{eq:eta_sa}.

\begin{theorem}[Submodular Intersection Cut]
\label{thm:ic_sa}
Let \( (\bar{x}, \bar{\beta}) \) be a fractional LP solution with
\( \bar{y} < \overline{F}_f(\bar{z}) \), 
where $\overline{F}_f(\bar{z})=\bar{\pi}^\top \bar z$ for some $\bar{\pi} \in \mathrm{EPM}_f$. Define 
\(\bar y = A_i \bar x\) and
\(\bar z = \mathbf{1} - \bar\beta\) for some $i \in [m]$. Let \(J\) be the index set of non-basic variables,
\(\{(r_x^j, r_\beta^j)\}_{j\in J}\) the corresponding extreme directions. 
Then, the inequality
\begin{equation} \label{eq:ic_sa}
    \sum_{j \in J}
    \max\!\left(0,\;
    \frac{A_i r_x^j + \bar{\pi} ^\top r_\beta^j}
         {\bar{\pi}^\top(\mathbf{1} - \bar{\beta}) - A_i \bar{x}}
    \right) \varphi^j(x, \beta) \ge 1,
\end{equation}
is valid for \eqref{eq:det_equiv}, where \( \varphi^j(x, \beta) \), $j \in J$, is the affine expression associated with non-basic variables. Moreover, the cut \eqref{eq:ic_sa} is violated by \( (\bar{x}, \bar{\beta}) \).
\end{theorem}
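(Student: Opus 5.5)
The proof reduces to the generic intersection-cut result, Theorem~\ref{thm:ic_prelim}, applied with the S-free half-space $\mathcal{H}_{\bar\pi}$ of Lemma~\ref{lem:sfree} and the step lengths of Lemma~\ref{lemma:ray_exit}. The plan has four steps.

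First, I place everything in the $(y,z)$ coordinates. The row-$i$ substitution $y = A_i x$ and the map $z = \mathbf{1}-\beta$ are affine. Under them, a ray $(r_x^j, r_\beta^j)$ of the simplicial cone at $(\bar x,\bar\beta)$ becomes the ray $(r_y^j, r_z^j) = (A_i r_x^j,\, -r_\beta^j)$ rooted at $(\bar y,\bar z)$. Every feasible point of \eqref{eq:det_equiv} gives, through these maps, a point $(y,z)$ with $z\in\{0,1\}^N$ and $y \ge h^\omega z_\omega$ for all $\omega$. Hence $y \ge f(z)$, so $(y,z)\in\mathrm{epi}(f)$; this uses the same reasoning as in Section~\ref{sec:mip}, namely $Ax\ge 0$ on feasible points.

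Second, I verify the hypotheses of Theorem~\ref{thm:ic_prelim} with $B=\mathcal{H}_{\bar\pi}$. Lemma~\ref{lem:sfree} gives that no point of $\mathrm{epi}(f)$ lies in $\intset(\mathcal{H}_{\bar\pi})$. Since $\bar y < \bar\pi^\top \bar z$, the apex lies strictly on the open side of the hyperplane $y=\bar\pi^\top z$. This is the role the interior condition plays. I will treat the box constraint $z\in[0,1]^N$ as inactive for the ray computation, since rays of the LP cone keep $\beta$ within the LP bounds only locally. To be safe, I would use the half-space $\{y\le \bar\pi^\top z\}$ itself as the S-free set. For this set the relevant argument is a direct one: any $(y,z)\in \mathrm{epi}(f)$ with $z$ binary satisfies $y\ge f(z)\ge \pi^\top z$ by the definition of $\mathrm{EPM}_f$, so it is not in the open half-space.

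Third, I substitute into Lemma~\ref{lemma:ray_exit}. Moving along ray $j$, the point leaves the half-space when $\bar y + \eta r_y^j = \bar\pi^\top(\bar z + \eta r_z^j)$. This gives
\[
1/\eta_j^* = \frac{r_y^j - \bar\pi^\top r_z^j}{\bar\pi^\top\bar z - \bar y}
= \frac{A_i r_x^j + \bar\pi^\top r_\beta^j}{\bar\pi^\top(\mathbf{1}-\bar\beta) - A_i\bar x}
\]
when this quantity is positive, and $1/\eta_j^*=0$ otherwise. The two cases combine into the $\max(0,\cdot)$ in \eqref{eq:ic_sa}. The nonbasic slack $s_j$ is rewritten as its affine expression $\varphi^j(x,\beta)$ in the original variables, obtained from the tableau row or the defining constraint.

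Fourth, Theorem~\ref{thm:ic_prelim} then gives validity on all points of the LP cone that lie outside $\intset(B)$, and in particular on all feasible points of \eqref{eq:det_equiv}, which by the second step are not in the open half-space. It also gives violation: at the apex every $\varphi^j(\bar x,\bar\beta)=0$, so the left side equals $0<1$.

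The main obstacle is the validity step in this mixed setting. Feasible points of \eqref{eq:det_equiv} must lie in the corner polyhedron (the relaxation containing $P_I$) and must also be mapped outside $\intset(\mathcal{H}_{\bar\pi})$. This is the standard convexity argument, which I would reproduce briefly. Suppose a feasible point $p = (\bar x,\bar\beta)+\sum_j s_j r^j$ with $s\ge 0$ has $\sum_j s_j/\eta_j^* < 1$. Write $p$ as a convex combination of the apex and the points $(\bar x,\bar\beta)+\eta_j^* r^j$, handling infinite $\eta_j^*$ as recession directions of the half-space. Then $p$ lies in $\intset(B)$, which contradicts the second step.
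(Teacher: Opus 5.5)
Your proposal is correct and follows the paper's proof: the S-free half-space $\mathcal{H}_{\bar\pi}$ of Lemma~\ref{lem:sfree}, then the exit distances of Lemma~\ref{lemma:ray_exit} rewritten through $y=A_i x$ and $z=\mathbf{1}-\beta$ (so $r_y^j=A_i r_x^j$ and $r_z^j=-r_\beta^j$), then Theorem~\ref{thm:ic_prelim}. Your additional checks fill in details the paper leaves implicit: that feasible points of \eqref{eq:det_equiv} land in $\mathrm{epi}(f)$ because $Ax\ge 0$, and that one should use the box-free half-space $\{y\le\bar\pi^\top z\}$, which is the set whose exit distances \eqref{eq:eta_sa} actually computes and whose interior contains the apex even when some $\bar z_\omega\in\{0,1\}$.
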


In Theorem~\ref{thm:ic_sa}, \(\varphi^j(x, \beta)\) is the affine tableau expression of non-basic variable \(s_j\), given by 
\begin{equation*}
(x,\beta)=(\bar x, \bar \beta)+ \sum_{j \in J} (r_x^j, r_\beta^j) s_j, \qquad j \in J.
\end{equation*}

\begin{remark}
\label{rem:hybrid}
Lemma~\ref{lemma:ray_exit} gives the closed-form exit distance for a fixed supporting
half-space \(\mathcal H_{\bar\pi}\), where \(\bar\pi\in\mathrm{EPM}_f\) satisfies
\(\overline F_f(\bar z)=\bar\pi^\top \bar z\). Since \(\overline F_f\) is the maximum of
finitely many affine functions induced by extreme points of \(\mathrm{EPM}_f\), the active
supporting vector can change as a tableau ray moves away from \((\bar y,\bar z)\). If one
tracks the envelope along a ray, the relevant residual is
\[
\zeta^j(\eta)
= \bigl(\bar y+\eta r_y^j\bigr)
  - \overline F_f\bigl(\bar z+\eta r_z^j\bigr), \qquad \eta\ge 0,
\]
which is piecewise-linear and concave. 
In our numerical experiments in Section~\ref{sec:numerical-c4}, we employ the hybrid discrete Newton algorithm of \cite{xu2024} to compute \( \eta_j^* \) in a finite number of steps by iteratively identifying the active extreme point of
\( \mathrm{EPM}_f \) and solving a local linear exit equation, see Algorithm \ref{algo:ic_sa_sep}. 
\end{remark}

\subsection{Modular Intersection Cut}
\label{sec:ic_ma}

We now present an alternative approach, the \emph{modular intersection cut}, which
constructs the S-free set directly from the combinatorial structure of the chance constraint,
without using the continuous requirement values $h^\omega$. We isolate the discrete feasibility structure of the
chance constraint through the notion of a \emph{probability cover}, yielding an S-free set that
is a simple half-space. 
As a side product, this approach circumvents the computational overhead of dynamically evaluating multi-faceted continuous
extensions, defined in \eqref{eq:ee}, required by the submodular approach in Section \ref{sec:ic_sa} (recall Remark \ref{rem:hybrid}).

\subsubsection{Probability Cover and Modular Coverage Function}
\label{sec:cover_function}

The central combinatorial object underlying the modular intersection cut is a \emph{probability cover} $K \subseteq \Omega$, defined as any set of scenarios whose aggregate probability exceeds the violation budget $\epsilon$. 

\begin{definition}[Probability Cover \citep{nemhauser1988integer}]
\label{def:prob_cover}
A set \( K \subseteq \Omega \) is called a probability if \( \sum_{\omega \in K} p_\omega > \epsilon \).
The cover \(K\) is minimal if no proper subset of \( K \) is a probability cover.
\end{definition}

The feasibility logic of a probability cover is straightforward: since the total probability mass
of \( K \) exceeds the violation budget \( \epsilon \), it is a strict logical necessity that at
least one scenario in \( K \) must be enforced, i.e., \( \beta_\omega = 0 \) for some
\( \omega \in K \). Using the enforcement
indicators \( z_\omega = 1 - \beta_\omega \), for any $(y,\beta) \in F$, we must have 
\( \sum_{\omega \in K} z_\omega \ge 1 \). Given a probability cover $K$, we define a set function $f_K: \{0,1\}^N \to \mathbb{R}_{+}$ over the Boolean hypercube as 
\begin{equation} \label{eq:modular_function}
    f_K(z) = \sum_{\omega \in K} z_\omega,
\end{equation}
with the convention \( f_K(\mathbf{0}) = 0 \).

\begin{proposition}[Modularity of \( f_K \)]
\label{prop:modularity}
Let \( K \subseteq \Omega \) be a probability cover. The function \( f_K \) defined in \eqref{eq:modular_function} is \emph{modular}, i.e., both
submodular and supermodular, over the Boolean hypercube \( \{0,1\}^{N} \).
\end{proposition}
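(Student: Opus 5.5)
The plan is to verify modularity directly from the definition. Since $f_K$ is a linear function of the indicator vector, modularity should reduce to an exact identity between the set-function values at $z \vee z'$, $z \wedge z'$, $z$, and $z'$.

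First, I will identify $\{0,1\}^N$ with subsets of $\Omega$ via $z \leftrightarrow S(z) := \{\omega : z_\omega = 1\}$. Under this identification, $f_K(z) = |K \cap S(z)|$, and the convention $f_K(\mathbf{0}) = 0$ agrees with the formula. Recall that $f_K$ is submodular if $f_K(z \vee z') + f_K(z \wedge z') \le f_K(z) + f_K(z')$ for all $z, z' \in \{0,1\}^N$, and supermodular if the reverse inequality holds. Here $\vee$ and $\wedge$ denote the componentwise maximum and minimum, which correspond to the union and intersection of the supports.

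The key step is the componentwise identity $\max(a,b) + \min(a,b) = a + b$ for $a, b \in \{0,1\}$. Applying it to each coordinate $\omega \in K$ and summing over $K$ gives
\[
f_K(z \vee z') + f_K(z \wedge z') = \sum_{\omega \in K} \bigl(\max(z_\omega, z'_\omega) + \min(z_\omega, z'_\omega)\bigr) = \sum_{\omega \in K} (z_\omega + z'_\omega) = f_K(z) + f_K(z').
\]
Equivalently, in set language this is inclusion--exclusion, $|K \cap (S \cup S')| + |K \cap S \cap S'| = |K \cap S| + |K \cap S'|$. Because equality holds, both the submodular and the supermodular inequalities are satisfied, so $f_K$ is modular.

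Alternatively, one can use the diminishing-returns characterization. The marginal gain $f_K(S \cup \{\omega\}) - f_K(S)$ equals $\mathbbm{1}[\omega \in K]$ for every $S \not\ni \omega$. Since this gain is independent of $S$, it is simultaneously nonincreasing and nondecreasing in $S$.

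There is no real obstacle in this argument. The only point worth noting is that the hypothesis that $K$ is a probability cover plays no role in modularity; it matters only later, for the validity of the cut $\sum_{\omega\in K} z_\omega \ge 1$.
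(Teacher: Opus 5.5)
Your proof is correct, and your diminishing-returns paragraph is exactly the paper's argument: the marginal gain of adding $k \notin U$ is $1$ if $k \in K$ and $0$ otherwise, so the condition holds with equality. Your primary route via the lattice identity $f_K(z \vee z') + f_K(z \wedge z') = f_K(z) + f_K(z')$ is an equally elementary variant that works directly from the definition without invoking the equivalence with diminishing returns, and you are right that the cover property of $K$ plays no role in modularity.
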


\subsubsection{Derivation of the Modular Intersection Cut}
\label{sec:ic_ma_derivation}

The modularity of \( f_K \) has a critical computational consequence: unlike the submodular
function $f$ in Section~\ref{sec:ic_sa}, the Lov\'{a}sz extension of a modular
function is linear. Specifically, the convex extension of \( f_K \) over
\( \mathbb{R}^{N} \) is simply \( \overline{F}_{f_K}(z) = \sum_{\omega \in K} z_\omega \),
the same linear form. This means the S-free set boundary is a single static hyperplane, 
eliminating the need to compute the extended envelope through an optimization problem as in \eqref{eq:ee}. 

\begin{lemma}[Cover S-Free Set]
\label{lem:cover_sfree}
Let \( K \subseteq \Omega \) be a probability cover, and define the closed half-space
\begin{equation*} \label{eq:cover_sfree}
    \mathcal{H}_K
    = \left\{ \beta \in \mathbb{R}^{N} :
      \sum_{\omega \in K} \beta_\omega \ge |K| - 1 \right\}.
\end{equation*}
Then, \( \mathcal{H}_K \) is an S-free set with respect to $F$, i.e.,
\(\intset\big(\mathcal{H}_K\big) \cap F = \emptyset\).
\end{lemma}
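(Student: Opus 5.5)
The plan is to show directly that no $\beta$-component of a point of $F$ lies in the open half-space $\intset(\mathcal{H}_K)$. First I identify the interior. $\mathcal{H}_K$ is a closed half-space defined by a nonzero linear functional (since $K$ is a probability cover, $\sum_{\omega\in K}p_\omega>\epsilon\ge 0$ forces $K\neq\emptyset$). Hence
\[
\intset(\mathcal{H}_K)=\Bigl\{\beta\in\mathbb{R}^N:\sum_{\omega\in K}\beta_\omega>|K|-1\Bigr\}.
\]
Throughout, $F$ is viewed through its projection onto the $\beta$-coordinates; equivalently, $\mathcal{H}_K$ is read as the cylinder $\mathbb{R}_+\times\mathcal{H}_K$ in $(y,\beta)$-space. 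The interior of that cylinder is again the corresponding strict inequality, so the argument is unaffected.

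Next, fix $(y,\beta)\in F$. Then $\beta\in\{0,1\}^N$ and $\sum_{\omega\in\Omega}p_\omega\beta_\omega\le\epsilon$. The key step is the cover logic already noted after Definition~\ref{def:prob_cover}. Suppose $\beta_\omega=1$ for all $\omega\in K$. Since $p_\omega>0$ and $\beta\ge 0$,
\[
\sum_{\omega\in\Omega}p_\omega\beta_\omega\ \ge\ \sum_{\omega\in K}p_\omega>\epsilon,
\]
which contradicts the knapsack constraint. So some $\omega\in K$ has $\beta_\omega=0$, i.e., $\sum_{\omega\in K}z_\omega\ge 1$ with $z=\mathbf{1}-\beta$. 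Because the $\beta_\omega$ are binary, it follows that $\sum_{\omega\in K}\beta_\omega\le |K|-1$. This is exactly the complement of the strict inequality, so $\beta\notin\intset(\mathcal{H}_K)$.

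Finally, $\mathcal{H}_K$ is a closed convex set, as Definition~\ref{def:sfree_prelim} requires. Combined with the previous step, this gives $\intset(\mathcal{H}_K)\cap F=\emptyset$.

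There is no real obstacle. The only point needing care is the bookkeeping of spaces: $\mathcal{H}_K$ lives in $\beta$-space while $F$ lives in $(y,\beta)$-space, and one must check that lifting by the free $y$-coordinate preserves the interior description.
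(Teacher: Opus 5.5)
Your proof is correct and follows the paper's argument essentially verbatim: if $\beta_\omega=1$ for every $\omega\in K$ the knapsack constraint is violated, so $\sum_{\omega\in K}\beta_\omega\le|K|-1$ and $\beta$ lies outside the open half-space, while $\mathcal{H}_K$ is closed and convex. Your extra bookkeeping (the explicit interior, $K\neq\emptyset$, the lift to $(y,\beta)$-space) is a harmless refinement; strictly, the interior of $\mathbb{R}_+\times\mathcal{H}_K$ is $\{y>0\}\times\intset(\mathcal{H}_K)$ rather than the bare strict inequality, but this set is still contained in the one you exclude, so the conclusion is unaffected.
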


Suppose the LP relaxation of $F$, defined in \eqref{eq:genprob_chance}, yields a fractional solution \( (\bar{x}, \bar{\beta}) \) in the interior of $\mathcal{H}_{K}$. 
Let \( J \) be the index set of non-basic variables \( s_j \ge 0 \) in the optimal
simplex tableau, and let \( r^j = (r_x^j, r_\beta^j) \) denote the extreme direction associated with
\( s_j \).

\begin{lemma}[Closed-Form Exit Distance for Modular Cut]
\label{lem:eta_ma}
Let \( (\bar{x}, \bar{\beta}) \in \intset(\mathcal{H}_{K})\). 
The ray exit distance is
\begin{equation} \label{eq:eta_ma}
    \eta_j^* = \frac{ \sum_{\omega \in K} \bar{\beta}_\omega - (|K| - 1)}{-\sum_{\omega \in K} r_{\beta,\omega}^j},
\end{equation}
if \( \sum_{\omega \in K} r_{\beta,\omega}^j < 0 \); otherwise, \( \eta_j^* = +\infty \).
\end{lemma}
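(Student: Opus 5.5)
The plan is to compute the exit distance directly from the definition $\eta_j^* = \sup\{\eta \ge 0 : (\bar x,\bar\beta) + \eta r^j \in \mathcal{H}_K\}$ in Section~\ref{sec:prelim_ic}. Since $\mathcal{H}_K$ depends only on the $\beta$-coordinates, only the $\beta$-component $r_\beta^j$ of the ray matters. Along the ray, the point is $\bar\beta + \eta r_\beta^j$. Membership in $\mathcal{H}_K$ reduces to the single scalar inequality
\[
g(\eta) := \sum_{\omega\in K}\bar\beta_\omega + \eta \sum_{\omega \in K} r^j_{\beta,\omega} - (|K|-1) \ge 0 .
\]
The function $g$ is affine in $\eta$. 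Because $(\bar x,\bar\beta)\in\intset(\mathcal{H}_K)$, we have $g(0) = \sum_{\omega\in K}\bar\beta_\omega - (|K|-1) > 0$. Here I use the elementary fact that the interior of a closed half-space $\{a^\top\beta \ge c\}$ with $a\neq 0$ is the open half-space $\{a^\top\beta > c\}$; this holds because $K$ is nonempty, as a cover has positive mass.

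The argument then splits on the sign of the slope $\sigma_j := \sum_{\omega\in K} r^j_{\beta,\omega}$.

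\emph{Case $\sigma_j \ge 0$.} Then $g(\eta) \ge g(0) > 0$ for all $\eta\ge 0$. The ray never leaves $\mathcal{H}_K$, the supremum is $+\infty$, and by the stated convention $1/\eta_j^*=0$.

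\emph{Case $\sigma_j<0$.} Then $g$ is strictly decreasing, so $\{\eta\ge0: g(\eta)\ge 0\}$ is the interval $[0, g(0)/(-\sigma_j)]$. Its supremum is exactly the expression in \eqref{eq:eta_ma}, which is positive and finite, and at that value the point lies on $\partial\mathcal{H}_K$.

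There is no real obstacle here; the computation is routine. The only points needing care are, first, confirming that the interior condition gives a strictly positive numerator, so that $\eta_j^*\in(0,\infty)$ and the cut coefficient $1/\eta_j^*$ is well defined. Second, one should note that $\mathcal{H}_K$ is treated as a set in the full $(x,\beta)$ space, unrestricted in $x$, so the $x$-component $r_x^j$ has no effect on the exit distance.
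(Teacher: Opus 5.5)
Your proposal is correct and follows essentially the same route as the paper. The paper tracks the affine function $\zeta_K^j(\eta) = \Delta_K - \eta\sum_{\omega\in K} r^j_{\beta,\omega}$, which is the negative of your $g$, and splits on the sign of $\sum_{\omega\in K} r^j_{\beta,\omega}$. Your remarks that the interior of the half-space is the open half-space (using $K\neq\emptyset$) and that $r_x^j$ plays no role are small points of rigor the paper leaves implicit.
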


\begin{theorem}[Modular Intersection Cut]
\label{thm:ic_ma}
Let \( (\bar{x}, \bar{\beta}) \) be a fractional LP solution with
$\sum_{\omega \in K} \bar{\beta}_\omega > (|K| - 1)$. Let \(J\) be the index set of non-basic variables,
and let \(\{(r_x^j, r_\beta^j)\}_{j\in J}\) be the corresponding extreme directions.
Then, the inequality
\begin{equation} \sum_{j \in J}
    \max\!\left(0,\;
    \frac{\sum_{\omega \in K} r_{\beta,\omega}^j}
         {(|K| - 1) - \sum_{\omega \in K}\bar{\beta}_\omega}
    \right)\varphi^j(x,\beta) \ge 1,
    \label{eq:ic_ma_main}
\end{equation}
is valid for \eqref{eq:det_equiv}, where \( \varphi^j(x, \beta) \), $j \in J$, is the affine expression associated with non-basic variables. Moreover, the cut \eqref{eq:ic_ma_main} is violated by \( (\bar{x}, \bar{\beta}) \).
\end{theorem}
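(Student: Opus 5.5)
The plan is to instantiate the generic intersection-cut result, Theorem~\ref{thm:ic_prelim}, with the S-free set $\mathcal{H}_K$ of Lemma~\ref{lem:cover_sfree}, lifted to the full $(x,\beta)$-space. The lifted set is $B:=\mathbb{R}^n\times\mathcal{H}_K$, which is closed and convex because it is a half-space. Its interior is $\mathbb{R}^n\times\intset(\mathcal{H}_K)$.

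\textbf{Step 1 (S-freeness in the full space).} Suppose $(x,\beta)$ is feasible for \eqref{eq:det_equiv}. Then $\beta\in\{0,1\}^N$ and $\sum_{\omega}p_\omega\beta_\omega\le\epsilon$. Since $K$ is a probability cover, at least one $\omega\in K$ has $\beta_\omega=0$, so $\sum_{\omega\in K}\beta_\omega\le |K|-1$. This is exactly Lemma~\ref{lem:cover_sfree}, and it applies because the projection of any feasible point of \eqref{eq:det_equiv} onto $\beta$ satisfies the knapsack and integrality conditions defining $F$. Hence no feasible point lies in $\intset(B)$. The hypothesis $\sum_{\omega\in K}\bar\beta_\omega>|K|-1$ gives $(\bar x,\bar\beta)\in\intset(B)$, as Theorem~\ref{thm:ic_prelim} requires.

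\textbf{Step 2 (exit distances and coefficients).} Along the ray $(\bar x,\bar\beta)+\eta r^j$, membership in $B$ depends only on the linear function $\sum_{\omega\in K}\bar\beta_\omega+\eta\sum_{\omega\in K}r^j_{\beta,\omega}$. Lemma~\ref{lem:eta_ma} therefore gives $\eta_j^*$: it is finite exactly when $\sum_{\omega\in K}r^j_{\beta,\omega}<0$.

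Next I compute $1/\eta_j^*$. In the finite case it equals
\[
\frac{-\sum_{\omega\in K}r^j_{\beta,\omega}}{\sum_{\omega\in K}\bar\beta_\omega-(|K|-1)}
=\frac{\sum_{\omega\in K}r^j_{\beta,\omega}}{(|K|-1)-\sum_{\omega\in K}\bar\beta_\omega}.
\]
This quantity is positive, because the numerator and denominator are both negative. When $\sum_{\omega\in K}r^j_{\beta,\omega}\ge 0$, the same expression is $\le 0$, while the convention gives $1/\eta_j^*=0$. So in every case $1/\eta_j^*$ equals the $\max(0,\cdot)$ coefficient in \eqref{eq:ic_ma_main}.

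\textbf{Step 3 (translating back to original variables).} Substituting $s_j=\varphi^j(x,\beta)$ into \eqref{eq:ic_general} yields \eqref{eq:ic_ma_main}. Validity then follows from Theorem~\ref{thm:ic_prelim}, applied to the LP relaxation whose tableau produced the cone \eqref{eq:simplex_cone}. Violation follows because $\varphi^j(\bar x,\bar\beta)=0$ for all $j\in J$, so the left-hand side evaluates to $0<1$.

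\textbf{Main obstacle.} The delicate point is Step 1's compatibility with Theorem~\ref{thm:ic_prelim}. That theorem is stated for $S=\mathbb{Z}^p\times\mathbb{R}^{n-p}$, whereas here the excluded set consists of points that are integer \emph{and} satisfy the knapsack constraint. I would handle this by rerunning Balas' argument for the target set $P\cap S'$, with $S'$ the set of $(x,\beta)$ such that $\beta$ is binary and satisfies the knapsack. That argument uses only two facts. First, every point of $P$ lies in the cone \eqref{eq:simplex_cone}, since that cone is a relaxation of $P$. Second, the cone intersected with $\intset(B)$ is contained in $\{s:\sum_j s_j/\eta_j^*<1\}$, which holds by convexity of $B$ together with the exit distances. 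Both facts remain true for a general target set, so the argument goes through unchanged.
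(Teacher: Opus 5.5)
Your Steps 1--3 follow the paper's proof: Lemma~\ref{lem:cover_sfree} for S-freeness, Lemma~\ref{lem:eta_ma} for the exit distances, then Theorem~\ref{thm:ic_prelim}. Your check that $1/\eta_j^*$ equals the $\max(0,\cdot)$ coefficient in both cases is correct and more explicit than the paper. You are also right that $\mathcal{H}_K$ is not free of all integer points ($\beta=\mathbf{1}$ lies in its interior). So Theorem~\ref{thm:ic_prelim} must be read with the knapsack-feasible target set, a point the paper's ``direct application'' passes over.

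The gap is in how you justify that extension: your second fact is the wrong inclusion, and as stated it is false. Balas' argument needs that cone points violating the cut lie in $\intset(B)$, i.e.\ $\{s\ge 0:\sum_j s_j/\eta_j^*<1\}$ maps into $\intset(B)$. You assert the converse, that cone points in $\intset(B)$ satisfy $\sum_j s_j/\eta_j^*<1$; even if true, this would not give validity.

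It also fails here. Let $\delta:=\sum_{\omega\in K}\bar\beta_\omega-(|K|-1)>0$, and take rays with $\sum_{\omega\in K}r^1_{\beta,\omega}=-a<0$ and $\sum_{\omega\in K}r^2_{\beta,\omega}=b>0$, so $\eta_1^*=\delta/a$ and $\eta_2^*=+\infty$. The point $s=(2\delta/a,\,2\delta/b)$, with all other $s_j=0$, keeps $\sum_{\omega\in K}\beta_\omega=\sum_{\omega\in K}\bar\beta_\omega$. Hence it lies in $\intset(B)$, yet $\sum_j s_j/\eta_j^*=2$.

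The correct inclusion does follow from convexity. However, rays with $\eta_j^*=+\infty$ need the extra fact that $r^j$ is a recession direction of $B$; the paper's proof of Theorem~\ref{thm:ic_prelim} also treats these only by convention.

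For this half-space you can avoid all of this with a direct computation. For a feasible point of \eqref{eq:det_equiv} with $s_j=\varphi^j(x,\beta)\ge 0$,
\[
|K|-1\;\ge\;\sum_{\omega\in K}\beta_\omega
=\sum_{\omega\in K}\bar\beta_\omega+\sum_{j\in J}s_j\sum_{\omega\in K}r^j_{\beta,\omega}
\;\ge\;\sum_{\omega\in K}\bar\beta_\omega-\delta\sum_{j\in J}\frac{s_j}{\eta_j^*}.
\]
The last step holds because each ray with $\sum_{\omega\in K}r^j_{\beta,\omega}<0$ contributes exactly $-\delta s_j/\eta_j^*$, and every other ray contributes a nonnegative amount. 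Rearranging gives $\sum_{j\in J}s_j/\eta_j^*\ge 1$. This handles the target-set issue and the infinite-distance rays at once.
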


\section{Intersection Cuts for the Recourse CCPs}
\label{sec:recourse}
In this section, we extend the intersection-cut framework of Section~\ref{sec:nonrecourse}
to the recourse setting, defined via  \eqref{eq:Pk_recourse}. For each scenario
\(\omega \in \Omega\), recall that
\begin{equation*}
P_\omega = \left\{ x \in \mathbb{R}^n \;:\; \exists\, y^\omega \in \mathbb{R}_+^d
\text{ such that } T^\omega x + W^\omega y^\omega \ge d^\omega \right\}.
\end{equation*}
We invoke Assumption~\textbf{(A2)} throughout, i.e., the sets
\(\{P_\omega\}_{\omega\in\Omega}\) share a common recession cone \(C\).
Let
\(C^* = \{\alpha \in \mathbb{R}^n : \alpha^\top r \ge 0,\ \forall r \in C\}\)
denote its dual cone. 
To illustrate this, suppose that matrices $T^{\omega}$ and $W^{\omega}$ are deterministic, i.e, $T^\omega=T$ and $W^\omega=W$ for all $\omega \in \Omega$. We then have $C^*=\{\sigma^\top T: \sigma \in \Sigma\}$, where $\Sigma:=\{\sigma: \sigma^\top W \le 0, \; \sigma \ge 0\}$.

Fix a closed, polyhedral set \(\bar{X} \subseteq \mathbb{R}^n\) with \(\bar{X} \supseteq X\) and  
\(P_\omega \cap \bar{X} \neq \emptyset\)  such that the recession cone of $ P_\omega \cap \bar{X}$ is contained in  $C$  for all \(\omega \in \Omega\).
Consider 
\begin{equation*} \label{eq:recourse_chance-c41}
G' = \Biggl\{ (x,\beta) \in \bar{X} \times \{0,1\}^N\, \Bigg| \,
\begin{array}{l}
\beta_\omega = 0 \Rightarrow x \in P_\omega, \; \forall\, \omega \in \Omega,\; 
\sum_{\omega \in \Omega} p_\omega \beta_\omega \le \epsilon
\end{array}
\Biggr\},
\end{equation*}
The key idea in this section is to solve single-scenario optimization subproblems and map the recourse
structure to the same mixing set with knapsack constraint \eqref{eq:genprob_chance}, used in
Section~\ref{sec:nonrecourse}. This enables direct use of the intersection cuts developed in Sections \ref{sec:ic_sa} and \ref{sec:ic_ma}.

For a given direction \(\alpha \in C^*\), define a single-scenario optimization problem as 
\begin{equation}
\label{eq:h_omega}
u^\omega(\alpha)
:=
\min\{\alpha^\top x : x \in P_\omega \cap \bar{X}\}
=
\min\{\alpha^\top x :
T^\omega x + W^\omega y^\omega \ge d^\omega,\ x \in \bar{X},\ y^\omega \ge 0\},
\quad \omega \in \Omega.
\end{equation}

\begin{proposition}
\label{prop:wellposed}
For any $\alpha \in C^*$, problem~\eqref{eq:h_omega} is feasible and
bounded for every $\omega \in \Omega$, and the inequality $\alpha^\top x \geq
u^\omega(\alpha)$ is valid for $P_\omega \cap \bar{X}$.
\end{proposition}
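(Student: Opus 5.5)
Feasibility is immediate from the standing assumption that $P_\omega \cap \bar{X} \neq \emptyset$ for every $\omega \in \Omega$. Take any $x \in P_\omega \cap \bar{X}$. By the definition \eqref{eq:Pk_recourse} of $P_\omega$, there is a $y^\omega \ge 0$ with $T^\omega x + W^\omega y^\omega \ge d^\omega$, so $(x,y^\omega)$ is feasible for the lifted formulation in \eqref{eq:h_omega}. The two descriptions of $u^\omega(\alpha)$ therefore have the same feasible projection onto $x$ and the same optimal value.

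For boundedness I plan to use the fact that $P_\omega \cap \bar{X}$ is a nonempty polyhedron. It is the intersection of the polyhedral projection $P_\omega$ (A1) with the polyhedral set $\bar{X}$. By the Minkowski--Weyl theorem it can be written as $\conv(V) + \mathrm{rec}(P_\omega \cap \bar{X})$ with $V$ finite. The standing assumption gives $\mathrm{rec}(P_\omega \cap \bar{X}) \subseteq C$. For any $\alpha \in C^*$ and any $r$ in this recession cone, $\alpha^\top r \ge 0$. Hence for every feasible $x = v + r$ with $v \in \conv(V)$, we get
\[
\alpha^\top x \;\ge\; \min_{v \in V} \alpha^\top v \;>\; -\infty .
\]
So the linear program is bounded below. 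Because it is a feasible, bounded LP, the fundamental theorem of linear programming says the infimum is attained, and $u^\omega(\alpha)$ is a finite minimum. As an alternative route I could use the LP criterion directly: a feasible LP $\min \alpha^\top x$ over a polyhedron is unbounded if and only if some recession direction $r$ satisfies $\alpha^\top r<0$, and membership $\alpha \in C^*$ rules out every such $r$.

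Validity is then definitional. Every $x \in P_\omega \cap \bar{X}$ is feasible for \eqref{eq:h_omega}, so $\alpha^\top x \ge u^\omega(\alpha)$.

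The main point needing care is the recession-cone step. The dual-cone condition only controls directions in $C$, which is why we need the hypothesis that the recession cone of the intersection $P_\omega \cap \bar{X}$, and not merely that of $P_\omega$, lies in $C$. I would also note, for polyhedrality, that the projection of a polyhedron is a polyhedron, by Fourier--Motzkin elimination.
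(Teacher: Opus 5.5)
Your proposal is correct and follows essentially the same route as the paper. Feasibility comes from $P_\omega \cap \bar{X} \neq \emptyset$, boundedness from $\mathrm{rec}(P_\omega \cap \bar{X}) \subseteq C$ together with $\alpha^\top r \ge 0$ for all $r \in C$, and validity from the definition of the minimum; you additionally spell out the Minkowski--Weyl decomposition and the attainment of the optimum, which the paper leaves implicit.

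One small correction to your closing remark: the hypothesis on the intersection is not genuinely stronger than one on $P_\omega$. For nonempty intersections of closed convex sets, $\mathrm{rec}(P_\omega \cap \bar{X}) = \mathrm{rec}(P_\omega) \cap \mathrm{rec}(\bar{X}) \subseteq C$ already follows from Assumption (A2).
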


By Proposition \ref{prop:wellposed}, we have 
\begin{equation*}
G' \subseteq  \bigcap_{\alpha \in C^*} \Bigl\{(x,\beta): (x,\beta) \in F'({\alpha})\Bigr\},
\end{equation*}
where, for each \(\alpha \in C^*\),
\begin{equation*}
F'({\alpha}) = \Biggl\{ (x,\beta) \in \bar{X} \times \{0,1\}^N\, \Bigg| \,
\begin{array}{l}
\beta_\omega = 0 \Rightarrow \alpha^\top x \ge u^\omega(\alpha), \; \forall\, \omega \in \Omega, \; 
\sum_{\omega \in \Omega} p_\omega \beta_\omega \le \epsilon
\end{array}
\Biggr\}.
\end{equation*}
Hence, strengthening the relaxation of each \(F'(\alpha)\) directly strengthens the relaxation of
\(G'\). In particular, any valid inequality for \(\conv\big(F'(\alpha)\big)\) yields a valid inequality for 
\(\conv(G')\).
Therefore, for a fixed $\alpha \in C^*$, we study the generic set
\begin{equation} \label{eq:recourse_chance-c4}
F' = \Biggl\{ (y,\beta) \in \mathbb{R} \times \{0,1\}^N\, \Bigg| \,
\begin{array}{l}
\beta_\omega = 0 \Rightarrow y \ge h^\omega, \; \forall\, \omega \in \Omega, \;
\sum_{\omega \in \Omega} p_\omega \beta_\omega \le \epsilon
\end{array}
\Biggr\},
\end{equation}
setting $y:=\alpha^\top x$ and $h^\omega := u^\omega(\alpha)$, $\omega \in \Omega$.  Observe that \eqref{eq:recourse_chance-c4} 
has exactly the same structure as the mixing set with a knapsack constraint in
\eqref{eq:genprob_chance}. Thus, one can adopt the intersection cuts developed in Sections \ref{sec:ic_sa} and \ref{sec:ic_ma}. We skip the details for brevity.

\section{Solution Methodology and Algorithm Implementation}
\label{sec:implment}

Cut generation approaches, developed in Sections~\ref{sec:nonrecourse} and~\ref{sec:recourse}, can be seamlessly implemented programmatically within the callback framework of modern commercial MIP solvers, e.g., Gurobi, CPLEX. Because the proposed intersection cuts require querying the underlying basis of the LP relaxation at some nodes of the branch-and-bound tree, cut-separation routines are embedded inside a proper ``constraint"  callback. 

We outline the separation procedures in this section for the non-recourse setting at a nodal point $(\bar x,\bar \beta)$. 
Section~\ref{alg:ic_sa} explains these procedures for the submodular intersection cut, whereas Section~\ref{alg:ic_ma} explains them for the modular intersection cut.
These algorithms can be simply modified for the recourse setting \eqref{eq:Pk_recourse} at an integer incumbent solution $(\bar x, \bar \beta)$ instead of a fractional solution, by setting $h^{\omega}$ to $u^{\omega}(\alpha)$, as defined in \eqref{eq:h_omega}, and $y$ to $\alpha^\top x$, for some $\alpha \in C^*$.

\subsection{Separation Routines for Submodular Intersection Cut}
\label{alg:ic_sa}

Algorithm \ref{algo:ic_sa} presents the main separation routine for the submodular intersection cut, developed in Section \ref{sec:ic_sa}, for the non-recourse CCPs, defined via \eqref{eq:Pk_nonrecourse}. 
As part of the separation, extended
envelope \(\overline{F}_f\) at $\bar z=\mathbf{1} -\bar \beta$ is evaluated via a greedy approach by sorting scenarios in a nonincreasing order of \( \bar z_\omega \) and 
computing marginal gains of \( f \) along that ordering, see Algorithm \ref{algo:ic_sa_greedy}. Moreover, ray exit distances are also calculated via the hybrid discrete Newton scheme \citep{xu2024}, as outlined in Algorithm \ref{algo:ic_sa_sep}. Algorithms \ref{algo:ic_sa_greedy} and \ref{algo:ic_sa_sep} are relegated to Section~\ref{sec:EC_ic_sa}.

\begin{algorithm}[!ht]
\caption{Separation of Submodular Intersection Cuts.}
\label{algo:ic_sa}
\begin{algorithmic}[1]

\Statex \textbf{Input:} Fractional LP solution $(\bar{x}, \bar{\beta})$, constraint row $A_i$, Simplex tableau, with $J$ as the set of non-basic variable indices.

\Statex \textbf{Output:} \textsc{CUTFOUND}.

\State Set $h^\omega \leftarrow b_i^\omega$ for all $\omega \in \Omega$. 

\State Set $\bar{y} \leftarrow A_i \bar{x}$ and $\bar{z}_\omega \leftarrow 1 - \bar{\beta}_\omega$
       for all $\omega \in \Omega$.

\State $(\bar \nu, \bar \pi) \leftarrow$ Lov\'{a}sz $\textrm{Greedy}(\bar z, h)$ 
        \Comment{$\bar{\nu}$ and $\bar \pi$ at point $\bar{z}$}               

\State {Let $r= \{(r_y^j, r_z^j): j \in J\}$, where $r_y^j \leftarrow A_i r_x^j$ and $r_z^j \leftarrow -r_\beta^j$ and $(r_x^j$, $r_\beta^j)$ is an extreme direction from the tableau, $j \in J$.}

\If{$\bar{y} \ge \bar{\nu}$}
    \State \Return \textsc{CUTFOUND}=FALSE; \textbf{break}
    
\Else
    \State {\textsc{CUTFOUND} $\leftarrow \textrm{SepCut}_{SA}(\bar x, \bar \beta, h, \bar y, \bar z, \bar \pi, \bar \nu, J, r)$.}
    
\EndIf

\end{algorithmic}
\end{algorithm}

\subsection{Separation Routines for Modular Intersection Cut}
\label{alg:ic_ma}

Algorithm \ref{algo:ic_ma} presents the main separation routine for the modular intersection cut, developed in Section \ref{sec:ic_ma}, for the non-recourse CCPs, defined via \eqref{eq:Pk_nonrecourse}. As part of the separation, a minimal cover is constructed greedily by sorting scenarios in nonincreasing order of
\( \bar{\beta}_\omega \) and adding them until the probability budget \( \epsilon \) is
exceeded, see Algorithm \ref{algo:ic_ma_cover}. This heuristic minimizes the violation depth \(\Delta_K := (|K| - 1) - \sum_{\omega \in K} \bar{\beta}_\omega \)), among all covers, producing a valid cut whenever \( \Delta_K < 0 \). Moreover, ray exit distances are calculated as outlined in Algorithm \ref{algo:ic_ma_sep}. Algorithms \ref{algo:ic_ma_cover} and \ref{algo:ic_ma_sep} are relegated to Section~\ref{sec:EC_ic_ma}.

\begin{algorithm}[!htb]
\caption{Separation of Modular Intersection Cuts.}
\label{algo:ic_ma}
\begin{algorithmic}[1]

\Statex \textbf{Input:} Fractional LP solution $(\bar{x}, \bar{\beta})$, constraint row $A_i$, and Simplex tableau.
\Statex \textbf{Output:} \textsc{CUTFOUND}.

\State Set $\bar{y} \leftarrow A_i \bar{x}$ and $\bar{z}_\omega \leftarrow 1 - \bar{\beta}_\omega$
       for all $\omega \in \Omega$. 
       
\State Let $J$ be the set of non-basic variable indices from the LP tableau.

\For{$j \in J$}
    \State Extract rays from tableau: $r_x^j$, $r_\beta^j$.
\EndFor
\State{$r= \{(r_x^j, r_\beta^j): j \in J\}$.}

\State $(K,\Delta_K) \leftarrow$ Minimal $\textrm{Cover}(\bar \beta)$. 

\If{$\Delta_K \ge 0$}
    \State \Return \textsc{CUTFOUND}=FALSE; \textbf{break}
    
\Else
    \State {\textsc{CUTFOUND} $\leftarrow \textrm{SepCut}_{MA}(\bar x, \bar \beta, K, \Delta_K, J, r)$.}
    
\EndIf

\end{algorithmic}
\end{algorithm}

\begin{remark}
\label{rem:distinction}
    Unlike the separation for the submodular intersection cuts, the separation for the modular intersection cuts does not need to calculate ray exit distances via the hybrid discrete Newton scheme, as outlined in Algorithm \ref{algo:ic_sa_sep}. In particular, as the Lov\'{a}sz extension of the modular function \( f_K \), defined in \eqref{eq:modular_function}, is
\emph{linear}, the corresponding S-free set is a half-space; see, Lemma~\ref{lem:cover_sfree}. Thus,  the ray exit distance \( \eta_j^* \) is computed in
closed form by solving one linear equation; recall Lemma~\ref{lem:eta_ma}.
This entirely eliminates the need for iterative subgradient evaluation, required for the submodular intersection cut, and significantly accelerates the separation oracle (see Section \ref{sec:numerical-c4}). 
\end{remark}

\section{Numerical Study}
\label{sec:numerical-c4}

In this section, we report computational experiments on randomly generated instances of an integrated production--distribution planning problem, adapted from  \cite{luedtke2014branch}. 
All algorithms were implemented in Python 3.11 and solved by GUROBI 11.0.0 using a constraint callback function.  All experiments were executed on a single thread of high-performance computing nodes of the Palmetto Cluster with 32 cores and 125 GB of memory, with a time limit of 7200 seconds per instance and algorithm.
All codes are available on \url{https://github.com/Soumya-Pathy/Chance_Intersection}.

\subsection{Test Problem Formulations}
\label{sec:test_problems}

We consider an integrated production and distribution planning problem in a supply chain
consisting of a set of manufacturers $i \in \mathcal{I}$ and a set of retailers
$j \in \mathcal{J}$. Each retailer aggregates demand from a set of customers, and the retailer demand is the sum of the individual customer demands it represents. We develop
two CCPs corresponding to the structural settings defined in Section~\ref{sec:problem}.

\subsubsection{Non-Recourse Setting}
\label{sec:nr_formulation}

In this static setting, shipment allocations $x_{ij}$ must be determined
``here-and-now'' before demand is observed. No recourse variables are available to
correct shortfalls after uncertainty is realized. The CCP is modeled as 
\begin{equation*} \label{eq:nr_model}
\min_{x \ge 0} \left\{ \sum_{i \in \mathcal{I}} \sum_{j \in \mathcal{J}} c_i x_{ij} \;:\; \mathbb{P}\!\Big\{
  \sum_{i \in \mathcal{I}} d_{ij}\, x_{ij} \ge r_{j\omega},\;
  \forall\, j \in \mathcal{J}
\Big\} \ge 1 - \epsilon\right\}.
\end{equation*}
where $d_{ij} \in [0,1]$ is the effective delivery rate from manufacturer $i$ to
retailer $j$ (i.e., $1 - d_{ij}$ is the damage rate during transportation), and
$r_{j\omega}$ is the realized demand at retailer $j$ under scenario $\omega$.
This structure matches the random right-hand side form $Ax \ge b^\omega$ of
Section~\ref{sec:nonrecourse}, with single-row relaxations applied independently
for each retailer $j \in \mathcal{J}$.

\subsubsection{Recourse Setting}
\label{sec:r_formulation}

In this setting, first-stage variables $x_i \ge 0$ represent quantities
manufactured by manufacturer $i \in \mathcal{I}$ prior to demand realization.
Second-stage variables $y_{ij} \ge 0$ represent shipment quantities from
manufacturer $i$ to retailer $j$ dispatched after the scenario $\omega$ is
observed. Let $d_{ij} \in [0,1]$ denote the effective delivery rate from manufacturer
$i$ to retailer $j$, so that $(1 - d_{ij})$ is the corresponding damage (loss) rate
during transportation. The recourse-feasible set for scenario $\omega$ is 
\begin{equation*} \label{eq:S_omega}
S_\omega = \left\{ x \ge 0 \;:\;
  \exists\, y \ge 0 \text{ s.t. }
  \sum_{j \in \mathcal{J}} y_{ij} \le x_i,\, \forall\, i \in \mathcal{I},\quad
  \sum_{i \in \mathcal{I}} d_{ij}\, y_{ij} \ge r_{j\omega},\; \forall\, j \in \mathcal{J}
\right\},
\end{equation*}
which matches the recourse setting, defined via \eqref{eq:Pk_recourse}, with deterministic recourse and technology matrices. 
The first constraint ensures that total shipments from manufacturer $i$ do not exceed
its production quantity, and the second ensures that effective deliveries meet demand at
each retailer. The CCP is modeled as 
\begin{equation*} \label{eq:r_model}
\min_{x \ge 0} \left\{ \sum_{i \in \mathcal{I}} c_i x_i \;:\; \mathbb{P}\{x \in S_\omega\} \ge 1 - \epsilon \right\},
\end{equation*}

\subsection{Algorithmic Strategies and Hybrid Methods}
\label{sec:hybrids}

We evaluated the total average computational time across five different approaches: 
\begin{itemize}
    \item MI: Mixing inequalities, separated via sequence-independent lifting
    procedures \citep{luedtke2014branch, kucukyavuz2012}, within a branch-and-cut framework.
    \item IC-SA: Submodular intersection cut  (Algorithm~\ref{algo:ic_sa}). 
    \item IC-MA: Modular intersection cut (Algorithm~\ref{algo:ic_ma}). 
    \item Q: Quantile cuts \citep{xie2016}, within a branch-and-cut framework.
    \item DEF: Direct solution of the Big-M MIP reformulation of \eqref{eq:det_equiv} by GUROBI. 
\end{itemize}

To address the well-known ``tailing-off'' effect in pure cutting-plane approaches, we also evaluated two \emph{hybrid} strategies:
\begin{itemize}
    \item MI-IC(S): Initiates with MI, shifts to IC-MA when the optimality gap stalls
    (remains constant) for a predefined time limit.
    \item Q-IC(S): Initiates with Q, shifts to IC-MA when the optimality gap stalls
    for a predefined time limit.
\end{itemize}
In our preliminary computational experiments, we observed that the gap-triggered hybrid
approaches, 
\begin{itemize}
    \item MI-IC(G): Initiates with MI, shifts to IC-MA when the optimality gap drops below a predefined threshold percentage, 
    \item IC-MI(G): Initiates with IC-MA, shifts to MI when the optimality gap drops below a predefined threshold percentage,
    \item IC-MI(S): Initiates with IC-MA, shifts to MI when the optimality gap stalls for a predefined time limit, 
\end{itemize}
were not competitive relative to other methods. 
Therefore, we do not present these variants in the computational results.

\subsection{Computational Performance Results}

For our experiments, we considered problem dimensions $(|\mathcal{I}|, |\mathcal{J}|) \in \{(20, 30), (40, 50), (50, 100)\}$, sample sizes $N \in \{1000, 2000, 3000, 4000, 5000, 6000\}$, and risk parameters $\epsilon \in \{0.05, 0.1\}$. Detailed data generation is presented in Section \ref{sec:EC_data_generation}. 
Tables~\ref{tab:non_recourse} and \ref{tab:recourse} present the average computational times over five independent samples under the non-recourse and recourse settings, respectively. 
\begin{table}[!tb]
\small
\centering
\caption{Average computational time under the non-recourse setting (in seconds).}
\label{tab:non_recourse}
 \begin{threeparttable}
\begin{tabular}{cccllcclcc}
\hline
$(|\mathcal{I}|, |\mathcal{J}|)$ & $N$ & $\epsilon$ & DEF & MI & IC-SA & IC-MA & Q & MI-IC(S) & Q-IC(S)\\
\hline
\multirow{6}{*}{(20, 30)}
& 1000 & \multirow{6}{*}{0.05}
& 42.0 & \textbf{5.0} & 18.6 & 5.9 & 5.5 & 5.5 & 5.5\\
& 2000 &
& 175.0 & \textbf{15.0} & 51.0 & 16.6 & 15.7 & 15.2 & 15.3\\
& 3000 &
& 520.0 & 35.0 & 111.1 & 36.7 & 35.7 & \textbf{34.1} & 34.6\\
& 4000 &
& 1350.0 & 75.0 & 231.0 & 78.7 & 76.8 & \textbf{73.6} & 74.6\\
& 5000 &
& 3200.0 & 180.0 & 442.6 & 187.3 & 183.2 & \textbf{177.1} & 177.6\\
& 6000 &
& 5800.0 & 395.0 & 874.0 & 400.5 & 389.8 & 387.3 & \textbf{383.9}\\
\hline
\multirow{6}{*}{(40, 50)}
& 1000 & \multirow{6}{*}{0.05}
& 128.0 & \textbf{10.0} & 32.2 & 11.6 & 11.2 & 10.8 & 10.9\\
& 2000 &
& 480.0 & \textbf{28.0} & 78.0 & 30.2 & 29.4 & 28.1 & 28.5\\
& 3000 &
& 1420.0 & 62.0 & 164.9 & 64.7 & 63.4 & \textbf{60.8} & 61.6\\
& 4000 &
& 3250.0 & 120.0 & 347.3 & 123.8 & 121.5 & \textbf{116.7} & 118.1\\
& 5000 &
& 5600.0 & 265.0 & 656.6 & 269.0 & 261.4 & \textbf{219.0} & 231.7\\
& 6000 &
& 6850.0 & 580.0 & 1274.0 & 572.0 & 555.0 & \textbf{460.3} & 488.7\\
\hline
\multirow{6}{*}{(50, 100)}
& 1000 & \multirow{6}{*}{0.05}
& 375.0 & \textbf{22.0} & 63.7 & 24.6 & 23.8 & 22.3 & 22.8\\
& 2000 &
& 1400.0 & 55.0 & 145.3 & 59.7 & 58.3 & \textbf{54.8} & 55.8\\
& 3000 &
& 3650.0 & 115.0 & 317.1 & 120.9 & 118.6 & \textbf{112.0} & 114.0\\
& 4000 &
& 5500.0 & 210.0 & 587.6 & 214.1 & 211.9 & \textbf{203.3} & 205.9\\
& 5000 &
& 6800.0 & 450.0 & 1082.4 & 449.1 & 442.0 & \textbf{364.2} & 387.5\\
& 6000 &
& * & 920.0 & 2250.0 & 913.5 & 904.0 & \textbf{722.2} & 776.7\\
\hline
\multirow{6}{*}{(20, 30)}
& 1000 & \multirow{6}{*}{0.1}
& 36.0 & \textbf{6.0} & 22.3 & 7.0 & 6.7 & 6.6 & 6.6\\
& 2000 &
& 148.0 & \textbf{18.0} & 61.2 & 19.9 & 19.4 & 18.2 & 18.4\\
& 3000 &
& 440.0 & 42.0 & 133.3 & 44.1 & 42.9 & \textbf{40.9} & 41.5\\
& 4000 &
& 1150.0 & 90.0 & 277.2 & 94.4 & 92.2 & \textbf{88.4} & 89.6\\
& 5000 &
& 2750.0 & 216.0 & 531.1 & 224.8 & 219.9 & 211.9 & \textbf{210.2}\\
& 6000 &
& 5100.0 & 474.0 & 1048.8 & 480.6 & 468.7 & \textbf{455.9} & 461.1\\
\hline
\multirow{6}{*}{(40, 50)}
& 1000 & \multirow{6}{*}{0.1}
& 110.0 & \textbf{12.0} & 38.6 & 13.9 & 13.7 & 13.0 & 13.2\\
& 2000 &
& 410.0 & \textbf{33.6} & 93.6 & 36.2 & 35.5 & 33.8 & 34.5\\
& 3000 &
& 1200.0 & 74.4 & 197.9 & 77.6 & 76.3 & \textbf{73.0} & 73.9\\
& 4000 &
& 2800.0 & 144.0 & 416.8 & 148.5 & 146.9 & \textbf{140.1} & 142.3\\
& 5000 &
& 4850.0 & 318.0 & 788.0 & 322.8 & 316.0 & \textbf{262.8} & 277.7\\
& 6000 &
& 6200.0 & 696.0 & 1528.8 & 686.4 & 673.3 & \textbf{552.4} & 584.4\\
\hline
\multirow{6}{*}{(50, 100)}
& 1000 & \multirow{6}{*}{0.1}
& 320.0 & \textbf{26.4} & 76.4 & 29.6 & 28.9 & 26.8 & 27.2\\
& 2000 &
& 1200.0 & 66.0 & 174.3 & 71.6 & 70.2 & \textbf{65.7} & 66.6\\
& 3000 &
& 3150.0 & 138.0 & 380.5 & 145.1 & 142.6 & \textbf{134.5} & 136.7\\
& 4000 &
& 4800.0 & 252.0 & 705.1 & 257.0 & 254.4 & \textbf{243.9} & 247.3\\
& 5000 &
& 6100.0 & 540.0 & 1298.9 & 538.9 & 531.4 & \textbf{437.0} & 462.2\\
& 6000 &
& * & 1104.0 & 2700.0 & 1096.2 & 1079.8 & \textbf{866.6} & 918.0\\
\hline
\end{tabular}
\begin{tablenotes}
    \item * Indicates that the problem was not solved within the time limit.
\end{tablenotes}
\end{threeparttable}
\end{table}

\begin{table}[!tb]
\small
\centering
\caption{Average computational time under the recourse setting (in seconds).}
\label{tab:recourse}
\begin{threeparttable}
\begin{tabular}{cccllcclcc}
\hline
$(|\mathcal{I}|, |\mathcal{J}|)$ & $N$ & $\epsilon$ & DEF & MI & IC-SA & IC-MA & Q & MI-IC(S) & Q-IC(S)\\
\hline
\multirow{6}{*}{(20, 30)}
& 1000 & \multirow{6}{*}{0.05}
& 55.0 & \textbf{6.5} & 24.2 & 7.6 & 8.2 & 7.1 & 7.2 \\
& 2000 &
& 228.0 & \textbf{19.5} & 66.4 & 21.6 & 23.0 & 19.7 & 20.1 \\
& 3000 &
& 676.0 & 45.5 & 144.5 & 47.8 & 50.0 & \textbf{44.3} & 45.1 \\
& 4000 &
& 1755.0 & 97.5 & 300.3 & 102.3 & 105.0 & \textbf{95.7} & 96.9 \\
& 5000 &
& 4160.0 & 234.0 & 575.3 & 243.5 & 248.0 & \textbf{228.0} & 229.4 \\
& 6000 &
& 6950.0 & 513.5 & 1136.2 & 520.6 & 528.0 & \textbf{498.2} & 501.0 \\
\hline
\multirow{6}{*}{(40, 50)}
& 1000 & \multirow{6}{*}{0.05}
& 166.0 & \textbf{13.0} & 41.9 & 15.0 & 15.8 & 14.0 & 14.3\\
& 2000 &
& 624.0 & \textbf{36.4} & 101.4 & 39.2 & 40.2 & 36.6 & 37.1\\
& 3000 &
& 1846.0 & 80.6 & 214.3 & 84.1 & 85.9 & \textbf{79.1} & 80.4\\
& 4000 &
& 4225.0 & 156.0 & 451.5 & 160.9 & 162.7 & \textbf{151.8} & 154.2\\
& 5000 &
& 6720.0 & 344.5 & 853.6 & 349.7 & 352.4 & \textbf{284.7} & 296.5\\
& 6000 &
& 7100.0 & 754.0 & 1656.2 & 743.6 & 752.5 & \textbf{598.5} & 625.1\\
\hline
\multirow{6}{*}{(50, 100)}
& 1000 & \multirow{6}{*}{0.05}
& 488.0 & \textbf{28.6} & 82.8 & 32.0 & 32.8 & 29.0 & 29.5\\
& 2000 &
& 1820.0 & 71.5 & 188.9 & 77.6 & 78.8 & \textbf{71.2} & 72.5\\
& 3000 &
& 4745.0 & 149.5 & 412.2 & 157.2 & 159.0 & \textbf{145.7} & 147.9\\
& 4000 &
& 6600.0 & 273.0 & 763.9 & 278.4 & 280.4 & \textbf{264.3} & 267.7\\
& 5000 &
& 7050.0 & 585.0 & 1407.1 & 583.8 & 589.4 & \textbf{473.4} & 489.7\\
& 6000 &
& * & 1196.0 & 2925.0 & 1187.5 & 1198.3 & \textbf{938.9} & 971.7\\
\hline
\multirow{6}{*}{(20, 30)}
& 1000 & \multirow{6}{*}{0.1}
& 47.0 & \textbf{7.8} & 29.0 & 9.1 & 9.8 & 8.5 & 8.7\\
& 2000 &
& 192.0 & \textbf{23.4} & 79.6 & 25.9 & 27.5 & 23.6 & 24.1\\
& 3000 &
& 572.0 & 54.6 & 173.3 & 57.3 & 59.0 & \textbf{53.1} & 54.3\\
& 4000 &
& 1495.0 & 117.0 & 360.4 & 122.7 & 125.5 & \textbf{114.8} & 116.7\\
& 5000 &
& 3575.0 & 280.8 & 690.4 & 292.2 & 297.0 & 277.9 & \textbf{273.3}\\
& 6000 &
& 6250.0 & 616.2 & 1363.4 & 624.8 & 636.0 & \textbf{596.9} & 602.0\\
\hline
\multirow{6}{*}{(40, 50)}
& 1000 & \multirow{6}{*}{0.1}
& 143.0 & \textbf{15.6} & 50.2 & 18.0 & 18.7 & 16.8 & 17.0\\
& 2000 &
& 533.0 & \textbf{43.6} & 121.7 & 47.1 & 48.2 & 43.9 & 44.6\\
& 3000 &
& 1560.0 & 96.7 & 257.2 & 100.8 & 102.2 & \textbf{94.9} & 96.5\\
& 4000 &
& 3640.0 & 187.2 & 541.8 & 193.1 & 195.0 & \textbf{182.1} & 185.5\\
& 5000 &
& 5880.0 & 413.4 & 1024.4 & 419.7 & 423.4 & \textbf{341.7} & 359.3\\
& 6000 &
& 7050.0 & 904.8 & 1987.4 & 892.3 & 901.4 & \textbf{718.1} & 751.7\\
\hline
\multirow{6}{*}{(50, 100)}
& 1000 & \multirow{6}{*}{0.1}
& 416.0 & \textbf{34.3} & 99.4 & 38.4 & 39.1 & 34.8 & 35.3\\
& 2000 &
& 1560.0 & 85.8 & 226.6 & 93.1 & 94.3 & \textbf{85.4} & 87.3\\
& 3000 &
& 4095.0 & 179.4 & 494.7 & 188.5 & 190.1 & \textbf{174.7} & 177.6\\
& 4000 &
& 5760.0 & 327.6 & 916.7 & 334.0 & 337.0 & \textbf{317.1} & 321.3\\
& 5000 &
& 6830.0 & 702.0 & 1688.5 & 700.6 & 706.1 & \textbf{568.1} & 585.9\\
& 6000 &
& * & 1435.2 & 3510.0 & 1425.1 & 1436.5 & \textbf{1126.6} & 1169.1\\
\hline
\end{tabular}
\begin{tablenotes}
    \item * Indicates that the problem was not solved within the time limit.
\end{tablenotes}
\end{threeparttable}
\end{table}

Observe from  Tables~\ref{tab:non_recourse} and~\ref{tab:recourse} that IC-SA is consistently slower than the other approaches. This is because IC-SA evaluates the extended envelope and identifies an active subgradient while computing the intersection distance along each tableau ray. In contrast, IC-MA uses the modular probability cover structure and computes the ray exit distance in closed form, thereby avoiding the iterative subgradient search required by IC-SA (recall Remark \ref{rem:distinction}).

For smaller and moderately sized instances, the standalone MI approach performs well and often gives the smallest solution time. This is expected because the mixing inequality separation is carried out in the original variable space and does not require extracting tableau rays. The standalone Q approach shows a similar trend and remains close to MI, since quantile cuts also exploit the ordered scenario structure without requiring tableau-based computations. However, Q is generally slightly slower than MI, suggesting that mixing inequalities provide stronger initial tightening. As the number of scenarios increases, the advantage of both MI and Q decreases. 
The hybrid method MI-IC(S) performs better in the larger instances. The numerical results indicate that MI is effective in the early stage of the solution process, where it quickly reduces the optimality gap. Once the gap improvement stalls, switching to IC-MA introduces stronger intersection cuts that further tighten the relaxation. This explains why MI-IC(S) gives the fastest or near-fastest solution times for most large instances. A similar behavior is observed for Q-IC(S), which remains competitive because it also benefits from switching to IC-MA after the initial cut family loses effectiveness.

We also observe that the recourse instances require more computational time than the corresponding non-recourse instances. This is due to the additional Farkas separation step needed to project the second-stage feasibility condition onto the first-stage variable space. Although this step increases the computational burden, the relative performance of the methods remains similar in the non-recourse and recourse settings. Overall, the results show that mixing inequalities are effective for smaller and moderate-sized instances, while the stall-triggered hybrid methods become more useful as the number of scenarios increases.

\subsection{Performance Profiles}

To compare the convergence behavior of the seven solution approaches beyond aggregate
computational times, we constructed performance profiles for both the non-recourse
and recourse settings.

For each parameter configuration $(|\mathcal{I}|,|\mathcal{J}|,N,\varepsilon)$, totaling 36 configurations, we generated five independent
random samples, yielding a total of $S=180$ instances.  
We solved each instance using all seven approaches. During every solve, the optimality gap is recorded at 2-second time intervals, producing a trajectory
$g_a^s(t)$ for approach $a$ on instance $s$ at elapsed time $t$ until either the instance is solved to optimality or time limit of 7200 seconds. To enable meaningful
cross-instance comparison, 
we calculated a normalized performance score
\[
P_a^s(t)
=
\frac{g_a^s(t) - \min_{a} g_a^s(t)}{\max_{a} g_a^s(t) - \min_{a} g_a^s(t)},
\]
so that $P_a^s(t) \in [0,1]$, with values near $0$ indicating near-best performance and
values near $1$ indicating comparatively poor performance at time $t$. 
We then defined the empirical cumulative distribution function (cdf)
\[
F_a(\tau,t)
=
\frac{1}{S} \sum_{s=1}^{S} \mathbf{1}\!\left[ P_a^s(t) \le \tau \right],
\qquad \tau \in [0,1],
\]
and created computational gap performance profiles for each approach. 

Figure~\ref{fig:cdf_profiles} shows the empirical cdfs $F_a(\tau, t)$ against elapsed computational
time $t$ at a fixed threshold $\tau=0.2$ for all approaches, under the non-recourse and recourse settings. The vertical axis, therefore, reports the
probability that the approach $a$ is within the near-best normalized gap region at time $t$,
while the horizontal axis extends to $1500$ seconds. Curves that rise steeply indicate
approaches reaching low optimality gaps early and consistently; flatter curves signal
slower convergence or greater variability. The relative ordering and separation of the
curves provide a distributional view of convergence that complements the aggregate times
in Tables~\ref{tab:non_recourse} and~\ref{tab:recourse}.

\begin{figure}[htbp]
\centering
\subfloat[Non-recourse setting.]{
    \includegraphics[width=0.48\textwidth]{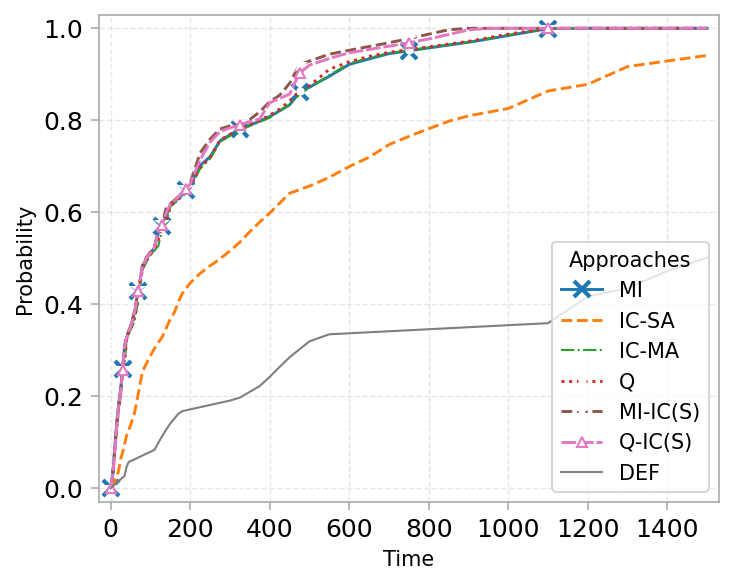}
    \label{fig:cdf_nonrecourse}
}
\hfill
\subfloat[Recourse setting.]{
    \includegraphics[width=0.48\textwidth]{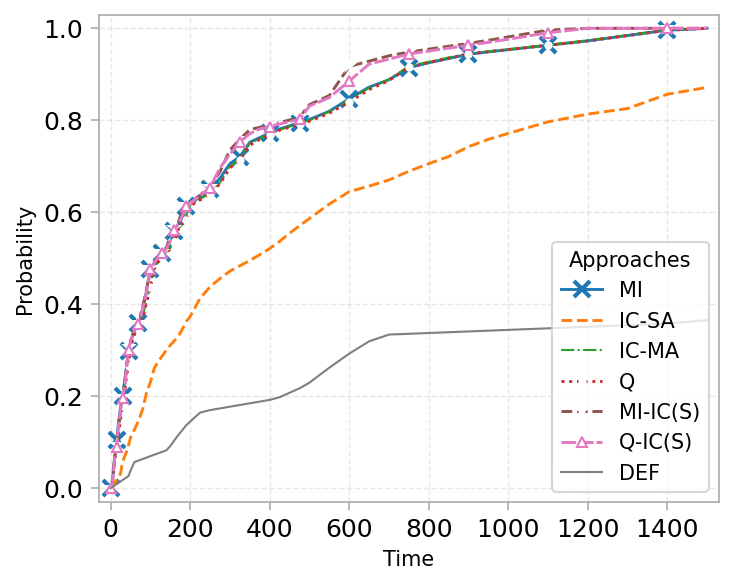}
    \label{fig:cdf_recourse}
}
\caption{Optimality gap performance profiles at
  threshold $\tau = 0.2$ over a time horizon of $1500$ seconds.}
\label{fig:cdf_profiles}
\end{figure}

As a complementary analysis, we also constructed a computational time performance profile that normalizes total solve times rather than optimality gap trajectories. For each instance $s$, let $t_a^s$ denote the total solve time of approach $a$. We define the normalized time score
\[
  P_a^s
  = \frac{t_a^s - \min_{a} t_{a}^s}{\max_{a} t_{a}^s - \min_{a} t_{a}^s},
\]
so that $P_a^s = 0$ for the fastest approach on instance $s$ and $P_a^s = 1$ for the slowest. The empirical cdf
\[
  F_a(\tau)
  = \frac{1}{S}\sum_{s=1}^{S}
    \mathbf{1}\!\left[P_a^s \le \tau\right],
\]
then reports, for each threshold $\tau \in [0,1]$, the fraction of instances on which $P_a^s$ for approach $a$ is at most $\tau$. By definition, the point $\big(0,F_a(0)\big)$ implies that $F_a(0)$ fraction of instances were solved quickest by approach $a$.

Figure~\ref{fig:profiles} displays the computational time performance profiles for the non-recourse and recourse settings. In both settings, MI and MI-IC(S) dominate the left portion of the profile, confirming that they are the fastest or near-fastest methods on the vast majority of instances. 
IC-SA is consistently the slowest approach, with its profile remaining at zero until $\tau = 0.6$, indicating that it is the maximum-time method on every instance tested. These profiles corroborate the aggregate timing results of Tables~\ref{tab:non_recourse} and~\ref{tab:recourse} and further highlight the practical advantage of the hybrid switching strategies at scale.

\begin{figure}[htbp]
\centering
\subfloat[Non-recourse setting.]{
    \includegraphics[width=0.48\textwidth]{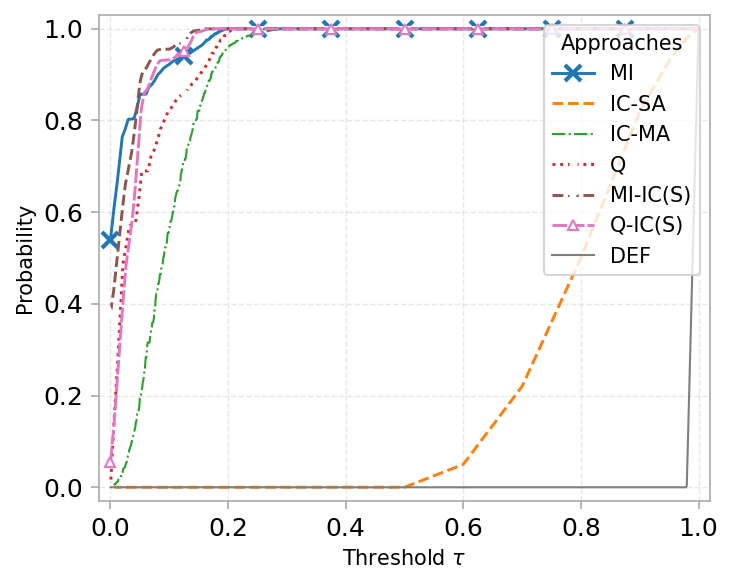}
    \label{fig:profile_nr}
}
\hfill
\subfloat[Recourse setting.]{
    \includegraphics[width=0.48\textwidth]{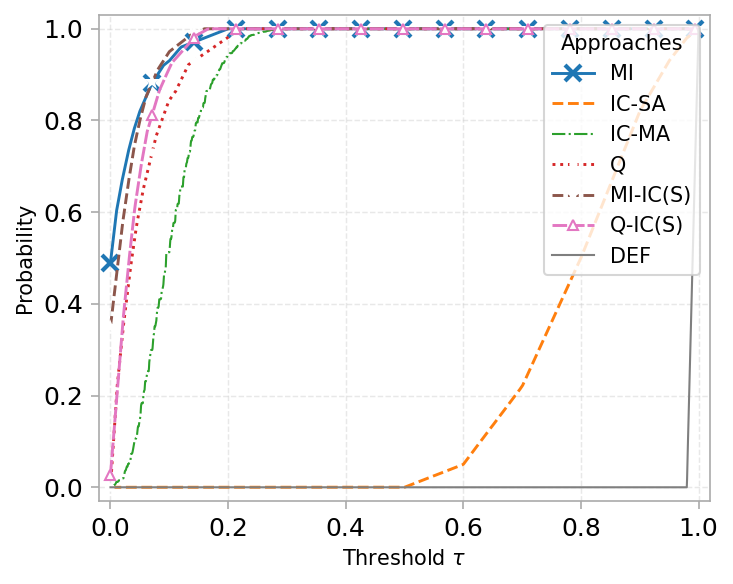}
    \label{fig:profile_rec}
}
\caption{Computational time performance profiles.}
\label{fig:profiles}
\end{figure}

\section{Conclusion}
\label{sec:conclusion-c4}

This paper developed a decomposition-based branch-and-cut framework for solving 
chance-constrained programs (CCPs) under finite support, addressing 
both static non-recourse and two-stage recourse settings. To overcome the well-documented 
weaknesses of the Big-M MIP reformulations, we derived two 
geometrically distinct families of intersection cuts. The first, the submodular intersection cut, IC-SA, establishes that the scenario requirement mapping, $f(z) = \max_{\omega: z_\omega=1}  h^\omega$, is submodular over the Boolean hypercube, constructs its continuous convex extension via the extended polymatroid and extended envelope, and derives cut coefficients through a hybrid discrete Newton algorithm whose exit distances are computed against the resulting S-free set. The second, the modular intersection cut, IC-MA, exploits the strict modularity of the function $f_K(z) = \sum_{\omega \in K} z_\omega$ based on the cover $K$, which reduces the S-free boundary to a single static hyperplane and admits a closed-form analytic step length, entirely eliminating the iterative subgradient search required by the former approach. For two-stage recourse formulations, Farkas' Lemma projects the second-stage feasibility conditions onto the first-stage space via a scenario-independent dual extreme point, reducing the recourse problem to an equivalent non-recourse structure that admits direct application of both cut families 
without inflating the variable space.

Computational experiments reveal a clear performance hierarchy. IC-SA is 
consistently the slowest method due to the overhead of dynamic subgradient evaluation at every LP node, confirming that theoretical tightness alone does not imply computational efficiency. Classical mixing inequalities (MI) dominate in small-to-medium scale instances.
However, on a large scale, pure MI suffers from the tailing effect, which causes the optimality gap to stagnate over an extended branch-and-bound 
exploration. In this regime, the time-stall-triggered hybrid MI-IC(S), which initializes 
with MI for rapid bound tightening and pivots to IC when the gap fails to 
improve within a threshold duration, typically achieves the fastest convergence. 
Recourse instances require greater computation time than non-recourse counterparts at every 
configuration, reflecting the additional per-node LP overhead of the Farkas ray separation step.

Future research can proceed in both theoretical and computational directions. On the theoretical side, an important open question is to characterize the closure obtained by intersecting all valid intersection cuts generated from CCP-induced S-free sets. In particular, it remains unclear whether this intersection closure admits a finite polyhedral description for linear CCPs, how it compares with mixing and quantile closures, and whether repeated closure operations converge to the convex hull of the chance-constrained feasible region. On the computational side, the results suggest the need for adaptive procedures that select among submodular, modular, and classical mixing cuts based on the local LP geometry and the observed progress of the branch-and-cut algorithm. Extending these ideas to distributionally robust chance constraints and to broader recourse structures is another promising direction.

\ACKNOWLEDGMENT{The authors gratefully acknowledge the support of the U.S. Air Force Office of Scientific Research through grant FA9550-24-1-0241. This research used in part resources on the Palmetto Cluster at Clemson University under National Science Foundation awards MRI 1228312, II NEW 1405767, MRI 1725573, and MRI 2018069. The views expressed in this article do not necessarily represent the views of NSF or the United States government.}


\ECSwitch 

\ECHead{Electronic Companion}



\section{Proofs}

\subsection{Proofs of Section \ref{sec:prelim_ic}}

\begin{proof}{Proof of Theorem \ref{thm:ic_prelim}.}
Let $x \in P_I$ and write $x = \bar{x} + \sum_{j \in J} r^j s_j$ with $s_j \ge 0$.
Suppose, for contradiction, that $\sum_{j \in J} s_j/\eta_j^* < 1$ for some $x \in P_I$.
Suppose for all $j \in J$, we have $\eta_j^*<\infty$. Define $t^j := \bar{x} + \eta_j^* r^j \in \partial B$ and let  
$\lambda_j := \frac{s_j}{\eta_j^*}$.
Let $\lambda_0 := 1-\sum_{j \in J}\lambda_j > 0$. Then, $x
  = \bar{x} + \sum_{j \in J} s_j r^j
  = \lambda_0\bar{x} + \sum_{j \in J} \lambda_j t^j$.
That is, $x$ is a convex combination of $\bar{x}\in \intset(B)$ and boundary points $t^j\in B$.
The convexity of $B$ implies $x\in B$. Moreover, because $\lambda_0>0$ and $\bar{x} \in \intset(B)$, we have $x\in \intset(B)$, contradicting $x\in P_I\subseteq S$ and
$\intset(B)\cap S=\emptyset$. Therefore $\sum_{j \in J} s_j/\eta_j^* \ge 1$ for every
$x \in P_I$. For rays with $\eta_j^*=\infty$, we use the convention $1/\eta_j^*=0$ as stated above. 
At the fractional apex $\bar{x}$, we have $s_j=0$ for all $j$, so the left-hand side of
\eqref{eq:ic_general} is $0<1$; hence the cut separates $\bar{x}$.
\hfill \Halmos \end{proof}

\subsection{Proofs of Section \ref{sec:nonrecourse}}

\begin{proof}{Proof of Proposition \ref{prop:submodularity}.}
For any $z \in \{0,1\}^N$, let $U$ be the set of indices where $z_{\omega}=1$, i.e., $U=\{\omega \in \Omega: z_{\omega}=1\}$. That is, there is a one-to-one mapping between $z$ and a set $U$. Hence, $f(z)\equiv f(U)$. A set function is submodular if and only if it satisfies the diminishing returns property: for all \( U \subseteq V \subseteq \Omega \) and any
\( \omega' \notin V \), we require \( f(U \cup \{\omega'\}) - f(U) \ge f(V \cup \{\omega'\}) - f(V) \).

Suppose \( \max_{j \in V} h^j \ge h^{\omega'} \). Then, 
\( f(V \cup \{\omega'\}) - f(V) = 0 \). Since \( f(\cdot) \) is monotone nondecreasing, we have
\( f(U \cup \{\omega'\}) - f(U) \ge 0 \). Hence,
\( f(U \cup \{\omega'\}) - f(U) \ge f(V \cup \{\omega'\}) - f(V) \).

On the other hand, suppose \( \max_{j \in V} h^j < h^{\omega'} \). Then, 
\( f(V \cup \{\omega'\}) - f(V) = h^{\omega'} - f(V) \). Because \( U \subseteq V \), we have
\( f(U) \le f(V) \), from which it follows that
\( h^{\omega'} - f(U) \ge h^{\omega'} - f(V) \). Hence,
\( f(U \cup \{\omega'\}) - f(U) \ge f(V \cup \{\omega'\}) - f(V) \).
In both cases, the diminishing returns condition holds, confirming that \( f \) is submodular. \hfill \Halmos \end{proof} 

\begin{proof}{Proof of Lemma \ref{lem:sfree}.}
Let $(y,z)\in\mathrm{epi}(f)$. By definition of $\mathrm{epi}(f)$, we have
$z\in\{0,1\}^N$ and $y\ge f(z)$.
Take any $\pi\in\mathrm{EPM}_f$. By \eqref{eq:epm},
$\pi^\top z\le f(z)$. Hence
$y\ge f(z)\ge \pi^\top z$. So, $\mathcal{H}_{\pi}\cap\mathrm{epi}(f) \neq \emptyset$ unless $y=\pi^\top z$. Therefore, 
$\intset\big(\mathcal{H}_{\pi}\big)\cap\mathrm{epi}(f)=\emptyset$. Since
$\mathcal{H}_{\pi}$ is a closed half-space, it is convex.
\hfill \Halmos \end{proof}

\begin{proof}{Proof of Lemma \ref{lemma:ray_exit}.}
For each non-basic ray \( j \in J \), define the intersection function
\begin{equation} \label{eq:intersection_function}
    \zeta^j(\eta)
    = \bigl(\bar{y} + \eta r_y^j\bigr)
      - \bar{\pi} ^ \top \bigl(\bar{z} + \eta r_z^j\bigr), \quad \eta \ge 0.
\end{equation}
The ray exit distance is \( \eta_j^* = \sup\{\eta > 0 : \zeta^j(\eta) \le 0\} \). 
Given that $\zeta^j(\eta)$ is linear, setting \( \zeta^j(\eta) = 0 \) yields the unique crossing point. If 
$r_y^j - \bar{\pi}^\top  r_z^j > 0$, then \( \eta_j^* > 0 \), as defined in \eqref{eq:eta_sa}. Otherwise, the ray cannot exit
\( \mathcal{H}_{\bar{\pi}} \); hence, \( \eta_j^* = +\infty \).
\hfill \Halmos \end{proof}

\begin{proof}{Proof of Theorem \ref{thm:ic_sa}.}
By Lemma~\ref{lem:sfree}, \(\mathcal{H}_{\bar \pi}\) is an S-free set for \eqref{eq:genprob_chance}.
Lemma~\ref{lemma:ray_exit} provides the ray-exit distances \(\eta_j^*\) from
\((\bar{y},\bar{z})\in\intset(\mathcal{H}_{\bar \pi})\) along each non-basic ray.
We now map the auxiliary variables \( (y, z) \) and the corresponding ray directions
\( (r_y^j, r_z^j) \), $j \in J$, back to the original decision variables \( (x, \beta) \) using the
transformations \( y = A_i x \) and \( z = \mathbf{1} - \beta \). This yields
\begin{align*}
    \bar{y} = A_i \bar{x}, \qquad r_y^j &= A_i r_x^j, \\
    \bar{z} = \mathbf{1} - \bar{\beta}, \qquad r_z^j &= -r_\beta^j. 
\end{align*}
whose substitution in \eqref{eq:eta_sa} yields
\begin{equation*} 
    \eta_j^*
    = \frac{\bar{\pi}^\top(\mathbf{1} - \bar{\beta}) - A_i \bar{x}}
           {A_i r_x^j + \bar{\pi}^\top r_\beta^j}.
\end{equation*}
Now, a direct application of Theorem \ref{thm:ic_prelim} yields \eqref{eq:ic_sa}. 
\hfill \Halmos \end{proof}

\begin{proof}{Proof of Proposition \ref{prop:modularity}.}
Since \( f_K \) is a nonnegative linear function of \( z \), its marginal gain upon adding any
element \( k \notin U \) for any set \( U \subseteq \Omega \) is identically \( 1 \) if
\( k \in K \) and \( 0 \) otherwise. Therefore, the diminishing returns
condition holds with equality for all \( U \subseteq V \subseteq \Omega \), confirming
modularity.
\hfill \Halmos \end{proof}

\begin{proof}{Proof of Lemma \ref{lem:cover_sfree}.}
Let \((y,\beta) \in F\). By definition,
\(\beta \in \{0,1\}^{N}\) and \(\sum_{\omega \in \Omega} p_{\omega} \beta_{\omega} \le \epsilon\). 
If \(\beta_\omega = 1\) for all \(\omega \in K\), then
\(\sum_{\omega \in \Omega} p_\omega \beta_\omega \ge \sum_{\omega \in K} p_\omega > \epsilon\),
which contradicts feasibility. Hence, there exists at least one
\(\omega \in K\) with \(\beta_\omega = 0\), and therefore
\(\sum_{\omega \in K} \beta_\omega \le |K|-1\). Thus,
\((y,\beta) \notin \mathcal{H}_K\) unless $\sum_{\omega \in K} \beta_\omega \le |K|-1$. Consequently, \(\intset\big(\mathcal{H}_K\big) \cap F = \emptyset\). 
Since \(\mathcal{H}_{K}\) is a closed half-space, it is convex. 
\hfill \Halmos \end{proof}

\begin{proof}{Proof of Lemma \ref{lem:eta_ma}.}
For each non-basic ray \( j \in J \), define the intersection function along the ray trajectory as
\begin{equation} \label{eq:zeta_modular}
    \zeta_K^j(\eta)
    =  (|K| - 1) - \sum_{\omega \in K} \bigl(\bar{\beta}_\omega  + \eta\, r_{\beta,\omega}^j\bigr) 
    = \Delta_K - \eta \sum_{\omega \in K} r_{\beta,\omega}^j,
\end{equation}
where \(\Delta_K := (|K| - 1) - \sum_{\omega \in K} \bar{\beta}_\omega   <0\). 
The ray exit distance is
\( \eta_j^* = \sup\{\eta > 0 : \zeta_K^j(\eta) \le 0\} \).
Given that $\zeta_K^j(\eta)$ is linear, setting \( \zeta_K^j(\eta) = 0 \) yields the unique crossing point. If 
\( \sum_{\omega \in K} r_{\beta,\omega}^j < 0 \), then \( \eta_j^* > 0 \), as defined in \eqref{eq:eta_ma}. Otherwise, the ray trajectory does not decrease the violation depth and cannot exit
\( \mathcal{H}_K \); hence, \( \eta_j^* = +\infty \).
\hfill \Halmos \end{proof}

\begin{proof}{Proof of Theorem \ref{thm:ic_ma}.}
By Lemma~\ref{lem:cover_sfree}, \(\mathcal{H}_K\) is a closed convex S-free set for the integer-feasible set.
Lemma~\ref{lem:eta_ma} provides the ray-exit distances \(\eta_j^*\) from
\((\bar{x},\bar{\beta})\in\intset(\mathcal{H}_K)\) along each non-basic ray.
Now, a direct application of Theorem \ref{thm:ic_prelim} yields \eqref{eq:ic_ma_main}. \hfill \Halmos \end{proof}

\subsection{Proofs of Section \ref{sec:recourse}}

\begin{proof}{Proof of Proposition \ref{prop:wellposed}.}
Feasibility follows from \(P_\omega \cap \bar{X} \neq \emptyset\).
Boundedness follows because $\alpha \in C^*$ implies $\alpha^\top r \geq 0$
for all $r \in C$ and that the recession cone of $ P_\omega \cap \bar{X}$ is contained in $C$, where $C$ is the common recession cone of
$P_\omega$. 
For validity, take any \(x \in P_\omega \cap \bar{X}\). By definition of
\eqref{eq:h_omega}, we have \( \alpha^\top x \ge u^\omega(\alpha)\).
\hfill \Halmos \end{proof}

\section{Missing Separation Routines}

\subsection{Submodular Intersection Cut}
\label{sec:EC_ic_sa}

Algorithms \ref{algo:ic_sa_greedy} and \ref{algo:ic_sa_sep} present the missing separation routines for the submodular intersection cut, developed in Section \ref{sec:ic_sa}, for the non-recourse CCPs, defined via \eqref{eq:Pk_nonrecourse}.

\begin{algorithm}[!tb]
\caption{Lov\'{a}sz $\textrm{Greedy}(\bar z, h)$.}
\label{algo:ic_sa_greedy}
\begin{algorithmic}[1]

\State
Let $\vartheta$ be a permutation of $\Omega$ describing a nonincreasing order of $\bar z_\omega$, $\omega \in \Omega$, i.e., $\bar z_{\vartheta(1)} \ge \bar z_{\vartheta(2)} \ge  \ldots \ge \bar z_{\vartheta(N)}$. 

\For{$k \in  [N]$}
    \State{$U \leftarrow \{\vartheta(1), \ldots, \vartheta(k-1)\}$}
    
    \State {
       $\pi_{\vartheta(k)} := f(U\cup \{\vartheta(k)\}) -f (U)$, where $f(U)=\max_{\omega \in U} h^{\omega}$ and $f(\emptyset)=0$.}

\EndFor

\State \Return{$\bar{\nu} \leftarrow \pi^\top \bar{z}$ and $\bar \pi \leftarrow \pi$.}

\end{algorithmic}
\end{algorithm}

\begin{algorithm}[!tb]
\caption{Cut separation routine $\textrm{SepCut}_{SA}(\bar x, \bar \beta, h, \bar y, \bar z, \bar \pi, \bar \nu, J, r)$.}
\label{algo:ic_sa_sep}
\begin{algorithmic}[1]

\State $\textit{cut} \leftarrow 0$, $\bar z^0 \leftarrow \bar z$, $\bar y^0 \leftarrow \bar y$.

\For{$j \in J$}

    \State $\lambda_j \leftarrow 0$, $\eta_j^* \leftarrow +\infty$, $\zeta_j  \leftarrow \bar{y} - \bar{\nu}$, $\pi^j \leftarrow \bar \pi$. 
     \While{$\zeta_j < 0$}
        
        \State Compute directional derivative 
               $D \leftarrow r_y^j - (\pi^j)^\top r_z^j$.
               \Comment{Local slope of $\zeta^j(\eta)$ on current facet}

        \If{$D \le 0$}
            \State $\lambda_j \leftarrow +\infty$; \textbf{break}.
            \Comment{Ray moves deeper into $\mathcal{H}_{\bar{\pi}}$; no intersection}
        \EndIf

        \State $\lambda_j \leftarrow \lambda_j - \zeta_j / D$.
               \Comment{Newton step: $\zeta_j + D \cdot \Delta\lambda_j = 0 \Rightarrow \Delta\lambda_j = -\zeta_j/D$}

        \State $\bar {z}^j \leftarrow \bar{z}^0 + \lambda_j r_z^j$, $\bar {y}^j \leftarrow \bar{y}^0 + \lambda_j r_y^j$.

        \State $(\bar \nu, \pi^j) \leftarrow$ Lov\'{a}sz $\textrm{Greedy}(\bar z^j, h)$ 
        \Comment{new $\bar{\nu}$ and $\pi^j$ at the updated point $\bar{z}$}

        \State $\zeta_j \leftarrow \bar{y}^j 
               - \bar{\nu}$.
               
    \EndWhile

    \State $\eta_j^* \leftarrow \lambda_j$.

    \If{$\eta_j^* = +\infty$} 
        \State $\psi_j \leftarrow 0$.
    \Else 
        \State $\psi_j \leftarrow 1 / \eta_j^*$.
    \EndIf

    \State Retrieve tableau mapping $\varphi^j(x, \beta)$ for non-basic variable $s_j$.
    \State $\textit{cut} \leftarrow \textit{cut} + \psi_j \cdot \varphi^j(x, \beta)$.

\EndFor

\State \Return $\textit{cut} \ge 1$. 

\end{algorithmic}
\end{algorithm}

\subsection{Modular Intersection Cut}
\label{sec:EC_ic_ma}

Algorithms \ref{algo:ic_ma_cover} and \ref{algo:ic_ma_sep} present the missing separation routines for the modular intersection cut, developed in Section \ref{sec:ic_ma}, for the non-recourse CCPs, defined via \eqref{eq:Pk_nonrecourse}. 

\begin{algorithm}[!tb]
\caption{Minimal $\textrm{Cover}(\bar \beta)$.}
\label{algo:ic_ma_cover}
\begin{algorithmic}[1]

\State  $K \leftarrow \emptyset$,  $\textit{mass} \leftarrow 0$, $\textit{depth} \leftarrow 0$.

\State
Let $\vartheta$ be a permutation of $\Omega$ describing a nonincreasing order of $\bar \beta_\omega$, $\omega \in \Omega$, i.e., $\bar \beta_{\vartheta(1)} \ge \bar \beta_{\vartheta(2)} \ge  \ldots \ge \bar \beta_{\vartheta(N)}$. 

\For{$k \in [N]$}
    \State{$K \leftarrow K \cup \{\vartheta(k)\}$}; $\textit{mass} \leftarrow \textit{mass} + p_{\vartheta(k)}$;\;
           $\textit{depth} \leftarrow \textit{depth} + \bar{\beta}_{\vartheta(k)}$.

    \If{$\textit{mass} > \epsilon$}
        \State $\Delta_K \leftarrow (|K| - 1) - \textit{depth}$
        \State \Return $K$ and $\Delta_K$; \textbf{break}.
    \EndIf
    
\EndFor

\end{algorithmic}
\end{algorithm}

\begin{algorithm}[!tb]
\caption{Cut separation routine $\textrm{SepCut}_{MA}(\bar x, \bar \beta, K, \Delta_K, J, r)$.}
\label{algo:ic_ma_sep}
\begin{algorithmic}[1]
\State $\textit{cut} \leftarrow 0$.
\For{$j \in J$}
    \State $\lambda_j \leftarrow 0$, $\eta_j^* \leftarrow +\infty$.  
    \State Compute directional derivative 
               $D \leftarrow - \sum_{\omega \in K}  r_{\beta,\omega}^j$.
        \If{$D \le 0$}
            \State $\lambda_j \leftarrow +\infty$. 
        \Else
        \State $\lambda_j \leftarrow -\frac{\Delta_K}{D}$;
        \EndIf
    \State $\eta_j^* \leftarrow \lambda_j$. 
    \If{$\eta_j^* = +\infty$} 
        \State $\psi_j \leftarrow 0$.
    \Else 
        \State $\psi_j \leftarrow 1 / \eta_j^*$.
    \EndIf
    \State Retrieve tableau mapping $\varphi^j(x, \beta)$ for non-basic variable $s_j$.
    \State $\textit{cut} \leftarrow \textit{cut} + \psi_j \cdot \varphi^j(x, \beta)$.
\EndFor
\State \Return $\textit{cut} \ge 1$. 
\end{algorithmic}
\end{algorithm}

\section{Test Instance Data Generation}
\label{sec:EC_data_generation}

To comprehensively evaluate the robustness of the proposed cutting-plane policies under
both non-recourse and recourse paradigms, we implement a scenario-based data generation
scheme following the framework established by \cite{luedtke2014branch}. This scheme
explicitly captures the correlation between resource efficiency and cost, as well as
compatibility restrictions between manufacturers and retailers.

For a given problem size $(|\mathcal{I}|, |\mathcal{J}|)$, a base instance is constructed
as follows. The unit cost $c_i$ for each manufacturer $i \in \mathcal{I}$ is drawn
independently from a normal distribution with mean $1$ and standard deviation $0.2$,
i.e., $c_i \sim \mathcal{N}(1,\, 0.2^2)$. These unit costs simultaneously serve as base
efficiency rates, enforcing the realistic correlation that higher-cost manufacturers
exhibit greater operational efficiency. A base service rate $\mu'_j$ for each retailer
$j \in \mathcal{J}$ is independently drawn from $\mathcal{N}(1,\, 0.2^2)$. The effective
delivery rate of manufacturer $i$ for retailer $j$ is then computed as 
\begin{equation} \label{eq:dij}
    d_{ij} = \min\{1,\,\max\{0,\, c_i\mu'_j\}\}.
\end{equation}

To reflect the realistic sparsity of supply chain compatibility, a fraction of the
$d_{ij}$ values is set to zero. Each retailer $j$ is independently categorized as
``difficult'' (with probability $0.5$) or ``easy'' (with probability $0.5$). For
difficult retailers, each $d_{ij}$ is independently zeroed with probability $0.6$;
for easy retailers, the zeroing probability is $0.3$. To prevent pathological instances
in which a retailer has no compatible manufacturers, zeroing is halted for retailer $j$
once more than $\lfloor 0.7 |\mathcal{I}| \rfloor$ of its service rates have been
set to zero.

For each base instance, $N$ independent demand scenarios are generated to form the finite
support $\Omega$. The random demand vector $r \in \mathbb{R}^{|\mathcal{J}|}$
is drawn from a multivariate normal distribution $r \sim \mathcal{N}(\bar{\lambda}, V)$
to capture correlated demand surges across retailers. The mean demand for
each retailer $j$, $\bar \lambda_j$, is independently drawn from $\mathcal{N}(110,\, 25^2)$. The covariance
matrix is constructed as 
\begin{equation} \label{eq:cov}
    V = \frac{1.25}{|\mathcal{J}|}\, \tilde{v} \tilde{v}^\top,
\end{equation}
where the entries of $\tilde{v} \in \mathbb{R}^{|\mathcal{J}|}$ are drawn independently
from $\mathcal{U}[-6.25,\, 25]$.

\end{document}